\documentclass[10pt,a4paper,oneside]{amsproc}

\usepackage{latexsym, amsthm}
\usepackage{amsfonts, amsmath, amssymb,comment}
\usepackage{euscript,mathrsfs}
\usepackage{url}
\usepackage{array}
\usepackage[a4paper]{geometry}
\usepackage{graphics}
\usepackage[usenames]{color}
\usepackage[all]{xy}
\usepackage{graphicx}
\usepackage{boxedminipage}

\newtheorem{theorem}{Theorem}[section]

\newtheorem{remark}[theorem]{Remark}

\newtheorem{lemma}[theorem]{Lemma}

\newtheorem{proposition}[theorem]{Proposition}

\newcommand{\B}{\mathcal{B}}

\newcommand{\Fq}{\mathbb{F}_q}

\newcommand{\LL}{\mathcal{L}}
\newcommand{\C}{\mathcal{C}}
\newcommand{\X}{\mathcal{X}}

\newcommand{\PRM}{\mathrm{PRM}_q}

\newcommand{\N}{\mathcal{N}}

\def\Fq{{\mathbb F}_q}
\def\AA{{\mathbb A}}

\def\PP{{\mathbb P}}

\begin{document}
\title{Remarks on second and third weights of Projective Reed-Muller codes}  
\author{Mrinmoy Datta}

\address{Department of Mathematics, \newline \indent
Indian Institute of Technology Hyderabad, Kandi, Sangareddy, Telangana, India}
\email{mrinmoy.datta@math.iith.ac.in}
\thanks{The first named author is partially supported by the grant DST/INT/RUS/RSF/P-41/2021 from the
Department of Science and Technology, Govt. of India} 

\keywords{Finite fields, Hypersurfaces, Rational points, Projective Reed-Muller codes}
\subjclass[2020]{Primary 14G05, 14G15, 11T06}

\date{}
\maketitle
{\centering\footnotesize Dedicated to Professor Sudhir Ghorpade on his 60th birthday.\par}
\begin{abstract}   
Determining the weight distributions of the projective Reed-Muller codes is a very hard problem and has been studied extensively in the literature. In this article, we provide an alternative proof of the second weight of the projective Reed-Muller codes $\PRM (d, m)$ where $m \ge 3$ and $3 \le d \le \frac{q+3}{2}$. We show that the second weight is attained by codewords that correspond to hypersurfaces containing a hyperplane under the hypothesis on $d$. Furthermore, we compute the second weight of $\PRM (d, 2)$ for $3 \le d \le q$.  Furthermore, we give an upper bound for the third weight of $\PRM(d, 2)$.
\end{abstract}
\section{Introduction}
The generalized Reed-Muller codes and the projective Reed-Muller codes have been studied widely by coding theorists and mathematicians in the last few decades. While the Reed-Muller codes were introduced by D.E. Muller and I.S. Reed in 1954, the so-called generalized Reed-Muller codes, which are one of the central objects of study in this article, were introduced by Kasami, Lin, and Peterson \cite{KLP} in 1968. While the above-mentioned codes have been used for practical purposes in coding theory, they have also given rise to fundamental problems in algebraic geometry, combinatorics, number theory, and many other fields. A projective analog of the generalized Reed-Muller codes, namely, the projective Reed-Muller codes, was introduced by Lachaud \cite{L} in 1988. An introduction to the codes mentioned above is given in Section \ref{introtocode}. 

As explained later, the determination of weight distributions of the generalized Reed-Muller codes $RM(d, m)$ and the projective Reed-Muller codes $PRM(d, m)$ are fascinating problems in mathematics. In this paper, we restrict our attention to the case when $1 \le d < q$. In particular, the Hamming weights of the Reed-Muller codes correspond to the number of $\Fq$-rational points on hypersurfaces defined over $\Fq$. The minimum weight of the Reed-Muller codes is well-known due to a very old result by Ore (cf Theorem \ref{one}), while the second weight of the Reed-Muller codes is known due to Olav Geil \cite{G}. The minimum weight of projective Reed-Muller codes was determined by Serre \cite{S} and independently by S{\o}rensen \cite{So}.  Much work has been done to compute the second weight of the projective Reed-Muller codes. They can be found in \cite{BR, CN, CN1, CR, RS, Sbspec, Sb2nd} among others. The results in the above-mentioned literature will be discussed later in this paper. However, the complete determination of the second weight of projective Reed-Muller codes is beyond reach at this moment. The complete picture of the progress made towards the problem of determination of the second weight of projective Reed-Muller codes could be obtained from Table 3 in \cite{CN1}. Similarly, some attempts have been made towards finding the third weight of projective Reed-Muller codes in \cite{RS, Sbspec}. 

In this article, we completely determine the second weight of $PRM (d, m)$ when $m = 2$. As it will be explained later, this amounts to the determination of the second highest number of $\Fq$-rational points on a projective plane curve of degree $d$. Moreover, we also compute an upper bound for the third highest possible number of $\Fq$-rational points on a projective plane curve of degree $d$. We remark that the second weight of $PRM (d, m)$ is known for $d \le \frac{q+3}{2}$, thanks to Carvalho and Neumann \cite{CN, CN1}. We give an alternative proof of this result which also shows that for $d \le \frac{q+3}{2}$, the codewords that attain the second weight correspond to hypersurfaces that are union of hyperplanes. Along the way, we also provide a mild improvement of the Homma-Kim elementary bound for hypersurfaces of degree $d \neq 2, \sqrt{q} + 1$. 

This article is organized as follows: In Section 2, we recall various bounds on the number of $\Fq$-rational points on varieties defined over $\Fq$, with a strong emphasis on projective plane curves of given degree. We also recall the definitions of the Reed-Muller codes in a small subsection to make the connections with our work more clear. In Section 3, we provide a minor improvement of the so-called Homma-Kim elementary bound. 
Finally, in Section 4, we determine the second highest number of $\Fq$-rational points on projective plane curves of degree $d$, determining the second weight of the corresponding projective Reed-Muller codes. Furthermore, we provide an upper bound on the third highest number of $\Fq$-rational points on projective plane curves.     

\section{Notations and Preliminary results}
Let $\Fq$ denote a finite field with $q$ elements, where $q = p^e$, for some prime number $p$ and a positive integer $e$. For $m \ge 1$, we will denote by $\PP^m$ (resp. $\AA^m)$ the $m$-dimensional projective space (resp. affine space) defined over $\overline{\Fq}$, an algebraic closure of $\Fq$. The set of $\Fq$-rational points in $\PP^m$ (resp. $\AA^m$) will be denoted by $\PP^m (\Fq)$ (resp. $\AA^m (\Fq)$). By a projective hypersurface of degree $d$ defined over $\Fq$ in $\PP^m$, we will mean the zero set of a homogeneous polynomial over $\Fq$ in $m+1$  variables of degree $d$ in $\PP^m$. If $\X$ is a projective hypersurface in $\PP^m$, we will denote by $\X(\Fq)$ the set of all $\Fq$-rational points on $\X$. We will use similar notations to describe affine hypersurfaces. Needless to say, a plane projective (resp. affine) curve is a hypersurface in $\PP^2$ (resp $\AA^2)$. We often denote by $\widehat{\PP^m (\Fq)}$ the set of all projective hyperplanes in $\PP^m$ defined over $\Fq$. For $m \ge 0$, we will use the notation $p_m$ to denote the cardinality of $\PP^m(\Fq)$, that is, $p_m = q^m + \dots + 1$.

Throughout this article, all our varieties (curves, hypersurfaces, etc.) and polynomials are defined over $\Fq$ unless otherwise mentioned.  We are interested in the determination of good upper bounds on the number of $\Fq$-rational points on projective (resp. affine) hypersurfaces of degree $d$ defined over $\Fq$ (as mentioned earlier, we will omit to mention the field of definition of varieties from now on) in $\PP^m$ (resp. $\AA^m$). In what follows, we will always assume that $2 \le d \le q$.  We devote the rest of the section to revisiting the known results related to the number of $\Fq$-rational points on varieties over finite fields.

\subsection{Rational points on affine hypersurfaces over finite fields}
It is well-known that a nonzero polynomial of degree $d$ in one variable with coefficients in a field $k$ has at most $d$ roots in $k$. When it comes to finite fields, that is, when $k = \Fq$, it is evident that a polynomial in $m$ variables with coefficients in $k$ has finitely many $\Fq$-rational zeroes in an $m$-dimensional affine space over $\Fq$. The following result goes back to {\O}. {O}re, and is found in \cite[Theorem 6.13]{LN}.

\begin{theorem}\cite[Theorem 6.13]{LN}\label{ore}
    If $d < q$, then an affine hypersurface in $\AA^m$ of degree $d$ 
    has at most $dq^{m-1}$ many $\Fq$-rational points. 
\end{theorem}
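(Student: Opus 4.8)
The plan is to argue by induction on the number of variables $m$, viewing $f$ as a polynomial in one distinguished variable with coefficients in the polynomial ring in the remaining variables. This is the classical Schwartz--Zippel style argument; the hypothesis $d < q$ serves only to keep all intermediate degrees below $q$ and to make the final bound non-trivial. For the base case $m = 1$, I would simply invoke the fundamental fact (recalled at the start of this subsection) that a nonzero polynomial of degree $d$ in one variable has at most $d$ roots in $\Fq$; since here $q^{m-1} = 1$, this is exactly the claimed bound $d\,q^{0} = d$.

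For the inductive step, let $f \in \Fq[x_1, \dots, x_m]$ be a nonzero polynomial of total degree $d$, and write it as a polynomial in $x_m$,
\[
f = \sum_{i=0}^{k} g_i(x_1, \dots, x_{m-1})\, x_m^{\,i},
\]
where $g_k \neq 0$ and $k$ is the degree of $f$ in $x_m$. The key structural observation is that the leading coefficient $g_k$ has total degree at most $d - k$, since every monomial of $g_k$ contributes a monomial of total degree $k$ more to $f$. I would then count the zeros $(a, b) \in \Fq^{m-1} \times \Fq$ of $f$ by splitting according to whether the leading coefficient $g_k(a)$ vanishes. When $g_k(a) = 0$, the point $a$ lies on the hypersurface $g_k = 0$ in $\AA^{m-1}$, which by the induction hypothesis (applicable because $\deg g_k \le d - k < q$) contains at most $(d-k)\,q^{m-2}$ rational points; as each such $a$ admits at most $q$ values of $b$, this contributes at most $(d-k)\,q^{m-1}$ zeros. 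When $g_k(a) \neq 0$, the specialization $f(a, x_m)$ is a nonzero univariate polynomial of degree exactly $k$, hence has at most $k$ roots $b$; since $a$ ranges over at most $q^{m-1}$ points of $\AA^{m-1}$, this contributes at most $k\,q^{m-1}$ zeros. (The degenerate case $k = 0$, in which $f$ does not involve $x_m$, is subsumed: the second part then contributes nothing and the first applies the induction hypothesis to $f$ itself in $m-1$ variables.)

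Adding the two contributions gives at most $(d-k)\,q^{m-1} + k\,q^{m-1} = d\,q^{m-1}$ rational zeros, which completes the induction. The one point demanding care -- which I regard as the crux of the bookkeeping rather than a genuine obstacle -- is the decision to split on the \emph{leading} coefficient $g_k$: it is precisely the complementary degree bound $\deg g_k \le d - k$ that makes the two pieces $(d-k)\,q^{m-1}$ and $k\,q^{m-1}$ recombine to the sharp total $d\,q^{m-1}$. Splitting on any lower-order coefficient would forfeit this balance and overshoot the bound, so the whole argument hinges on tracking the top $x_m$-degree correctly.
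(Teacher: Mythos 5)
Your argument is correct, and there is nothing to compare it against within the paper itself: the paper states this result as a classical theorem of Ore and cites \cite[Theorem 6.13]{LN} without reproducing a proof. Your induction on the number of variables --- writing $f = \sum_i g_i x_m^i$, splitting the zeros according to whether the leading coefficient $g_k$ vanishes, and exploiting the complementary degree bound $\deg g_k \le d-k$ so that the two contributions $(d-k)q^{m-1}$ and $kq^{m-1}$ recombine exactly --- is the standard proof, essentially the one found in the cited reference, and all the delicate points (the degenerate case $k=0$, and the fact that $\deg g_k \le d-k < q$ keeps the induction hypothesis applicable) are handled correctly.
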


The bound in Theorem \ref{ore} is attained and corresponds to the minimum distance of the generalized Reed-Muller codes. The next result, towards determination of the second weight of generalized Reed-Muller codes, was proved by Geil in \cite[Proposition 2]{G}. 
 
\begin{theorem}\cite[Proposition 2]{G}\label{G}
    Suppose $m \ge 2$ and $S$ be an affine hypersurface in $\AA^m$ of degree at most 
    $d$. 
    If $2 \le d < q$ and $|S(\Fq)| < dq^{m-1}$, then $|S(\Fq)| \le dq^{m-1} - (d-1)q^{m-2}$. 
\end{theorem}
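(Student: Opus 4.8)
The plan is to argue by induction on $m$, slicing by a pencil of parallel hyperplanes and reducing each slice to the $(m-1)$-dimensional situation. To make the induction close I would strengthen the statement so that it also records the extremal configurations: that $|S(\Fq)| = dq^{m-1}$ forces the defining polynomial $f$ (then necessarily of degree exactly $d$) to be, up to a scalar, a product of $d$ distinct pairwise parallel affine hyperplanes. The base case $m=1$ is the elementary fact that a nonzero univariate polynomial of degree $\le d < q$ has at most $d$ roots, with equality exactly when it splits into $d$ distinct linear factors.

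For the inductive step, fix $f$ with $\deg f \le d$ defining $S$ and write $f = \prod_{i=1}^{k}(x_m - c_i)\, g$, where $c_1,\dots,c_k$ are the distinct values $c$ for which the hyperplane $\{x_m = c\}$ lies in $S$ (equivalently $(x_m-c)\mid f$) and $g$ has no such factor, so $\deg g \le d-k$. Restricting to the $q$ slices $\{x_m = c\}\cong \AA^{m-1}$ and applying Theorem \ref{ore} in dimension $m-1$ to the $q-k$ nonvanishing restrictions $g_c$ gives
\[
  |S(\Fq)| \le k q^{m-1} + (q-k)(d-k)\, q^{m-2} = dq^{m-1} - k(d-k)\, q^{m-2}.
\]
Since $k(d-k) \ge d-1$ for every integer $k$ with $1 \le k \le d-1$, in that range one already obtains $|S(\Fq)| \le dq^{m-1}-(d-1)q^{m-2}$. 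The value $k=d$ forces $g$ constant and $|S(\Fq)| = dq^{m-1}$, which is excluded by hypothesis. Hence whenever $f$ has an affine-linear factor one may change coordinates so that this factor is $x_m - c_0$, guaranteeing $k \ge 1$, and the theorem follows at once.

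This leaves the case in which $f$ has no affine-linear factor over $\Fq$, so that $k=0$ in every direction. Here $|S(\Fq)| = \sum_c N_c$ with each $N_c \le dq^{m-2}$, and the inductive hypothesis (the second-weight bound in dimension $m-1$) gives $N_c \le dq^{m-2}-(d-1)q^{m-3}$ for every non-extremal slice. If no slice is extremal the $q$ deficiencies already total at least $(d-1)q^{m-2}$ and we are done. If some slice $\{x_m = c\}$ is extremal, the strengthened hypothesis forces $f(x_1,\dots,x_{m-1},c)$ to be a union of $d$ parallel hyperplanes; since the degree-$d$ part of every restriction equals the fixed form obtained from $f$ by setting $x_m=0$, all extremal slices share a common direction, which we may take to be $x_1$, and each extremal restriction then depends on $x_1$ alone. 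Consequently every coefficient of $f$ attached to a monomial genuinely involving $x_2,\dots,x_{m-1}$ is a polynomial in $x_m$ that vanishes at all the extremal values $c$; once there are more than $d$ of these, those coefficients vanish identically, $f$ becomes a polynomial in $x_1$ and $x_m$ only, and the count collapses to $q^{m-2}$ copies of the planar count, reducing matters to the case $m=2$.

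The main obstacle is precisely the intermediate regime in which $f$ has no linear factor yet possesses between one and $d$ extremal slices: there the crude per-slice estimate loses a term of size $(d-1)^2 q^{m-3}$, and closing this gap requires exploiting the rigidity imposed by the common leading form $x_1^d$ of all slices to sharpen the bound on the non-extremal restrictions. An alternative that bypasses this case analysis entirely, and is the route Geil actually takes, is the footprint (Gr\"obner-basis) bound: one estimates $|S(\Fq)|$ by the number of monomials outside the leading-term ideal of the ideal generated by $f$ together with $x_1^q - x_1,\dots,x_m^q - x_m$, for which the second-largest admissible value is a direct, if technical, combinatorial computation.
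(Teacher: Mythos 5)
There is no proof of this statement in the paper to compare against: Theorem \ref{G} is quoted from Geil \cite{G}, whose actual argument is the footprint (Gr\"obner basis) computation that you name in your closing paragraph but do not carry out. Your slicing induction must therefore stand on its own, and it does not. The parts you do close are fine: the identity $kq^{m-1}+(q-k)(d-k)q^{m-2}=dq^{m-1}-k(d-k)q^{m-2}$, the inequality $k(d-k)\ge d-1$ for $1\le k\le d-1$, hence the case where $f$ has an $\Fq$-linear factor, and the case where $f$ has no linear factor and no slice attains the Ore bound. But the regime you flag yourself --- no $\Fq$-linear factor and $e\ge 1$ Ore-extremal slices --- is not a residual technicality; it is where the entire content of the theorem lives. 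There the per-slice count gives only
\begin{equation*}
|S(\Fq)| \le e\,dq^{m-2}+(q-e)\bigl(dq^{m-2}-(d-1)q^{m-3}\bigr)=dq^{m-1}-(d-1)q^{m-2}+e(d-1)q^{m-3},
\end{equation*}
which overshoots the target for every $e\ge 1$, and your rigidity argument (common leading direction $x_1$, vanishing of coefficients of monomials involving $x_2,\dots,x_{m-1}$) removes the problem only when $e>d$ and $m\ge 3$.

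Worse, the induction is circular at its base. For $m=2$ the slices are lines, so ``each extremal restriction depends on $x_1$ alone'' is vacuous, there are no variables $x_2,\dots,x_{m-1}$ whose coefficients could be forced to vanish, and ``reducing matters to the case $m=2$'' reduces $m=2$ to itself. This unhandled case is nonempty: $x_1^2-x_2$ (odd $q$, with $(q-1)/2$ split slices), the Artin--Schreier polynomial $x_1^p-x_1-x_2$, and, at the critical size, the affine Hermitian curve $x_2^{\sqrt{q}}+x_2-x_1^{\sqrt{q}+1}$ (square $q$, $d=\sqrt{q}+1$) all have no $\Fq$-linear factor yet many completely split slices. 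The Hermitian example shows how sharp the difficulty is: it has exactly $e=q-d+1$ split slices, for which your estimate $q(d-1)+e$ equals the target $dq-d+1$ exactly, so a proof along these lines must rule out the existence of line-free curves with even one more split slice --- and that is equivalent in difficulty to Homma--Kim-type results on line-free plane curves (Theorem \ref{hk}, Remark \ref{attained}), precisely the kind of deep input an elementary slicing proof was meant to avoid. As written, then, the proposal is an incomplete proof: either such curve-theoretic input must be imported for the line-free case, or one abandons slicing and performs Geil's footprint computation in full.
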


While Theorem \ref{G} appeared in 2008, the upper bound for large values of $q$ was established in 1995 by Cherdieu and Rolland \cite{CR}. They also proved the following result.  

\begin{theorem}\cite[Theorem 2.1]{CR}\label{CR}
    Let $m \ge 2$ and $S$ be an affine hypersurface of degree $d$ in $\AA^m$ 
    that is a union of $d$ affine hyperplanes. If $|S(\Fq)| = dq^{m-1} - (d-1)q^{m-2}$, then
    \begin{enumerate}
        \item[(a)] either \textbf{Type I:} $S$ is a union of $d$ affine hyperplanes 
        passing through a common codimension $2$ affine subspace 
        in $\AA^m$, or
        \item[(b)] \textbf{Type II:} $S$ is a union of $d-1$ parallel hyperplanes 
        and another hyperplane 
        intersecting each of the $d-1$ hyperplanes at distinct codimension $2$ affine subspaces. 
    \end{enumerate}
\end{theorem}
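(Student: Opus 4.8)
The plan is to work directly with the incremental inclusion--exclusion for a union of distinct hyperplanes and to read off the equality case from the parallelism structure. Write $S = H_1 \cup \cdots \cup H_d$ with the $H_i$ distinct affine hyperplanes (distinctness being forced by $\deg S = d$), and set $U_k = H_1 \cup \cdots \cup H_k$. Two distinct hyperplanes are disjoint when parallel and meet in a codimension-$2$ affine subspace, hence in exactly $q^{m-2}$ rational points, when not. Thus the identity $|U_k(\Fq)| = |U_{k-1}(\Fq)| + q^{m-1} - |H_k \cap U_{k-1}(\Fq)|$ telescopes to
\[
E := dq^{m-1} - |S(\Fq)| = \sum_{k=2}^{d} |H_k \cap U_{k-1}(\Fq)|,
\]
a quantity manifestly independent of the ordering of the $H_i$.

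First I would group the $H_i$ into parallel classes $C_1, \dots, C_r$; since $E = (d-1)q^{m-2} > 0$, not all $H_i$ are parallel, so $r \ge 2$. The key device is a \emph{good ordering}: place one hyperplane from $C_1$, then one from $C_2$, then the rest in any order. Then every $H_k$ with $k \ge 2$ is non-parallel to some earlier hyperplane, so $H_k \cap U_{k-1}$ contains a codimension-$2$ subspace and $|H_k \cap U_{k-1}(\Fq)| \ge q^{m-2}$. This reproves $E \ge (d-1)q^{m-2}$ (consistent with Theorem \ref{G} restricted to unions of hyperplanes), and together with the hypothesis it forces \emph{every} term in a good ordering to equal exactly $q^{m-2}$. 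Consequently, for each $k$ the union $\bigcup_{i<k}(H_k \cap H_i)$ of codimension-$2$ pieces collapses to a single codimension-$2$ subspace; that is, all the non-empty intersections $H_k \cap H_i$ coincide.

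Then I would split on $r$. If $r \ge 3$, take three pairwise non-parallel hyperplanes as the first three members of a good ordering; the collapse condition forces $H_3 \cap H_1 = H_3 \cap H_2$, and since a codimension-$2$ affine subspace contained in another must equal it, we get $H_1 \cap H_2 = H_1 \cap H_3 = H_2 \cap H_3 =: W$. Running this over all triples of class representatives shows every representative contains $W = H_1 \cap H_2$. If some class had a second hyperplane $H_1'$, then $H_1'$, being parallel to $H_1$, would be disjoint from $W \subseteq H_1$; yet the pairwise non-parallel triple $H_1', H_2, H_3$ would again be concurrent along $H_2 \cap H_3 = W$, forcing $H_1' \supseteq W$, a contradiction. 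Hence every class is a singleton, all $d$ hyperplanes pass through the common codimension-$2$ subspace $W$, which is Type I.

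Finally, if $r = 2$ with classes of sizes $d_1, d_2$, I would rule out $d_1, d_2 \ge 2$. Label the classes $A_1, \dots, A_{d_1}$ and $B_1, \dots, B_{d_2}$ and use the good ordering $A_1, B_1, A_2, \dots, A_{d_1}, B_2, \dots$; the term for $B_2$ is $\big|\bigcup_i (B_2 \cap A_i)(\Fq)\big|$, which the collapse condition forces to be a single codimension-$2$ subspace. But the pieces $B_2 \cap A_i$ lie in the pairwise disjoint parallel hyperplanes $A_i$, so they are disjoint and cannot coincide once $d_1 \ge 2$ — a contradiction. Thus one class is a singleton, giving $d-1$ parallel hyperplanes met by a single transversal in $d-1$ pairwise disjoint, hence distinct, codimension-$2$ subspaces, which is Type II. The step I expect to be the main obstacle is the equality bookkeeping that pins each intersection term to $q^{m-2}$ and extracts the collapse of $\bigcup_{i<k}(H_k \cap H_i)$ to one subspace; the ordering-independence of $E$ is exactly what legitimizes reading off this condition from whichever good ordering is convenient for each case.
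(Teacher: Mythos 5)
The paper itself offers no proof of this statement: it is quoted verbatim as a known result of Cherdieu and Rolland (\cite[Theorem 2.1]{CR}), so there is no internal argument to compare yours against; what matters is whether your blind reconstruction stands on its own, and it does. The telescoping identity $dq^{m-1}-|S(\Fq)| = \sum_{k\ge 2}|(H_k\cap U_{k-1})(\Fq)|$, its independence of the ordering, and the ``good ordering'' device (start with two non-parallel hyperplanes, so every term with $k\ge 2$ is at least $q^{m-2}$) correctly pin each term to exactly $q^{m-2}$ under the equality hypothesis; the collapse of $\bigcup_{i<k}(H_k\cap H_i)$ to a single codimension-$2$ subspace is sound, since two \emph{distinct} codimension-$2$ affine subspaces over $\Fq$ have union with strictly more than $q^{m-2}$ rational points; and both case analyses are airtight ($r\ge 3$ yields Type I via the triple argument, while $r=2$ with both parallel classes of size at least $2$ is impossible because the pieces $B_2\cap A_i$ lie in pairwise disjoint parallel hyperplanes). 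Two points you pass over silently, both harmless under the paper's standing assumption $d\le q$ but worth a sentence each: (i) distinctness of the $H_i$ is not ``forced by $\deg S=d$'' alone (a degree-$d$ polynomial can have repeated linear factors); rather, if the underlying set were a union of $e<d$ distinct hyperplanes it would have at most $eq^{m-1}\le (d-1)q^{m-1} < dq^{m-1}-(d-1)q^{m-2}$ points, contradicting the hypothesis; (ii) you implicitly assume each $H_i$ is defined over $\Fq$ (you use $|H_i(\Fq)|=q^{m-1}$); hyperplane components not defined over $\Fq$ come in Frobenius orbits of size at least $2$ and each such hyperplane has at most $q^{m-2}$ rational points, so they too are excluded by the point count when $d\le q$. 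With these two remarks supplied, your argument is a complete, elementary, and self-contained proof of the cited classification.
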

Finally, it was proved by Leducq \cite{Le} that the second weight codewords of generalized Reed-Muller codes are given by a hypersurface that is a union of hyperplanes. We record the result by Leducq in our language for easy reference.

\begin{theorem}\cite[Proposition 6]{Le}\label{leducq}
    Let $m \ge 2$  and \ $2 \le d \le q-1$. If $S$ is an affine hypersurface of degree $d$ defined over $\Fq$ such that $|S(\Fq)| = dq^{m-1} - (d-1)q^{m-2}$, then $S$ is either of Type I or of Type II. 
\end{theorem}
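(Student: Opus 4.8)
The plan is to reduce the statement to Theorem \ref{CR}, which already classifies the unions of $d$ affine hyperplanes that attain the value $dq^{m-1}-(d-1)q^{m-2}$. Thus it suffices to show that every degree-$d$ hypersurface $S=\V(f)$ with $|S(\Fq)|=dq^{m-1}-(d-1)q^{m-2}$ is, as a set, a union of $d$ distinct $\Fq$-hyperplanes. First I would normalise $f$. A repeated factor does not change $S$, so $S=\V(f_{\mathrm{red}})$ for the radical $f_{\mathrm{red}}$, of some degree $d'\le d$; were $d'<d$, Theorem \ref{ore} would give $|S(\Fq)|\le d'q^{m-1}\le(d-1)q^{m-1}<dq^{m-1}-(d-1)q^{m-2}$, contradicting extremality. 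Hence $f$ is squarefree of degree exactly $d$, and I may write $f=\ell_1\cdots\ell_a\cdot g$, where $\ell_1,\dots,\ell_a$ are its distinct $\Fq$-linear factors and $g$ is squarefree with no $\Fq$-linear factor. Setting $\delta=\deg g=d-a$, the entire problem reduces to proving $\delta=0$.

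Suppose $\delta\ge1$; since $g$ has no linear factor over $\Fq$, necessarily $\delta\ge2$. Put $P=\V(\ell_1\cdots\ell_a)$ and $D=\V(g)$, so $S=P\cup D$, the set $P$ is a union of $a$ hyperplanes, and $D$ is a degree-$\delta$ hypersurface whose projective closure contains no hyperplane (no spurious component at infinity arises, because $\deg g=\delta$ and $g$ has no linear factor). I would apply the Homma-Kim elementary bound---valid in the present range except for quadric and Hermitian-type components in dimension $\ge 3$, and the bound that Section 3 revisits---to obtain $|D(\Fq)|\le(\delta-1)q^{m-1}+p_{m-2}$. Combining with $|P(\Fq)|\le aq^{m-1}$,
\[
|S(\Fq)|\le |P(\Fq)|+|D(\Fq)|\le aq^{m-1}+(\delta-1)q^{m-1}+p_{m-2}=(d-1)q^{m-1}+p_{m-2}.
\]
Since $p_{m-2}=q^{m-2}+p_{m-3}$ and $d\le q-1$,
\[
dq^{m-1}-(d-1)q^{m-2}-\bigl((d-1)q^{m-1}+p_{m-2}\bigr)=q^{m-1}-dq^{m-2}-p_{m-3}\ge q^{m-2}-p_{m-3}>0,
\]
the final strict inequality holding because $p_{m-3}<q^{m-2}$ for every $m\ge2$. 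This contradicts the extremality of $S$, so $\delta=0$: the hypersurface $S$ is a union of $a=d$ distinct $\Fq$-hyperplanes, and Theorem \ref{CR} finishes the argument.

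The hard part is precisely the two excluded degrees $\delta\in\{2,\sqrt{q}+1\}$ in dimension $m\ge3$, where the elementary bound above is false: a hyperbolic quadric or a Hermitian-type hypersurface carries an excess of order $q^{(m-1)/2}$ over $(\delta-1)q^{m-1}+p_{m-2}$, which for small $m$ is comparable to the slack $q^{m-2}-p_{m-3}$ in the displayed inequality. To handle these I expect that the crude union bound $|S(\Fq)|\le|P(\Fq)|+|D(\Fq)|$ must be replaced by genuine inclusion--exclusion: each of the $a=d-\delta$ hyperplane components meets $D$ in an $(m-2)$-dimensional section contributing on the order of $q^{m-2}$ common points, and subtracting these overlaps compensates for the excess carried by the exceptional component. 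One must also dispose of the pure case $a=0$, that is, show directly that a single irreducible quadric (or Hermitian-type hypersurface) cannot by itself reach $dq^{m-1}-(d-1)q^{m-2}$; here the known point counts for such hypersurfaces suffice. Essentially all the technical difficulty lives in this exceptional analysis, which is the same phenomenon responsible for the degree restriction $d\neq2,\sqrt{q}+1$ in the improved bound of Section 3.

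Finally, this route is uniform in $m$ and absorbs the base case $m=2$, where the elementary bound becomes the classical estimate $(\delta-1)q+1$ for plane curves with no $\Fq$-rational line component; this is valid throughout the range $\delta\le q-1$, the only exceptional curve being a quartic over $\FF_4$, which the hypothesis $d\le q-1$ excludes. In every dimension that hypothesis is used precisely to keep $q^{m-2}(q-d)-p_{m-3}$ positive in the displayed inequality, after which Theorem \ref{CR} delivers Type I or Type II.
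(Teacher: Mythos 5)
First, note that the paper does not prove this statement at all: it is imported verbatim from Leducq \cite{Le} as a black box, so your attempt has to stand entirely on its own. Its skeleton --- normalise to a squarefree polynomial, split off the distinct $\Fq$-linear factors, show the linear-component-free part $D=\V(g)$ of degree $\delta$ cannot occur, then invoke Theorem \ref{CR} --- is sensible, and for $m=2$ it is actually complete, because there your key inequality $|D(\Fq)|\le(\delta-1)q+1$ is exactly Theorem \ref{hk}, and the exceptional pair $(\delta,q)=(4,4)$ is indeed ruled out by $d\le q-1$. The genuine gap is in $m\ge 3$: the inequality you invoke, $|D(\Fq)|\le(\delta-1)q^{m-1}+p_{m-2}$, is \emph{not} the Homma--Kim elementary bound. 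Theorem \ref{hkel} only gives $(\delta-1)q^{m-1}+\delta q^{m-2}+p_{m-3}$, which exceeds your claimed bound by $(\delta-1)q^{m-2}$. What you wrote is the naive higher-dimensional analogue of the Sziklai bound; it is false for $\delta=2$ and $\delta=\sqrt q+1$ (as you concede), and for every other degree it is not a theorem either --- the strongest result available in this paper, Proposition \ref{rudim}, improves the elementary bound only by the lower-order term $(\delta-2)q^{m-3}$, and the paper explicitly describes the kind of ``major improvement'' of the elementary bound that you would need as conceivable future work, i.e.\ an open problem.

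This is not a repairable bookkeeping slip: if you substitute the correct elementary bound, your displayed estimate becomes $|S(\Fq)|\le(d-1)q^{m-1}+\delta q^{m-2}+p_{m-3}$, and comparing with $dq^{m-1}-(d-1)q^{m-2}$ now requires $(d+\delta-1)q^{m-2}+p_{m-3}<q^{m-1}$, which holds precisely when $d+\delta\le q$. Since $\delta$ can be as large as $d$ (the case $a=0$ of no linear factors), the contradiction closes only for $d\le q/2$ or so --- essentially the regime already covered by Sboui, by Carvalho--Neumann, and by Section 3 of this paper, and far short of Leducq's full range $2\le d\le q-1$; the large-$d$ range is exactly where the theorem is hard. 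Finally, the inclusion--exclusion sketch you offer for the exceptional degrees is a hope rather than an argument: it would need uniform lower bounds on $|P(\Fq)\cap D(\Fq)|$ that you never establish, and even if carried out it would not touch the non-exceptional degrees in $m\ge3$, where your central inequality remains unproven. As written, the proposal proves the theorem for $m=2$ and, for $m\ge 3$, only in a degree range roughly half of what the statement asserts.
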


\subsection{Rational points on projective hypersurfaces over finite fields}
Having revisited some of the important results concerning affine hypersurfaces over finite fields, we now turn our attention to projective hypersurfaces. The first result to this end, resolving a conjecture by Tsfasman, was proved by Serre in \cite{S}. The result was independently proved by S{\o}rensen in \cite{So}. We refer the reader to \cite{DG} for a more recent proof of the same. 

\begin{theorem}[Serre's inequality, \cite{S}]\label{S}
    Let $V$ be a projective hypersurface of degree $d$ in $\PP^m$. 
    If  $1 \le d \le q$, then $|V(\Fq)| \le dq^{m-1} + p_{m-2}$.
\end{theorem}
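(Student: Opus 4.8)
The plan is to prove the bound by induction on $m$, using the affine results of Ore (Theorem~\ref{ore}) and Geil (Theorem~\ref{G}) to control the points of $V$ lying off a suitable hyperplane. For the base case $m=1$, a nonzero binary form of degree $d$ has at most $d$ zeros in $\PP^1(\Fq)$, and since $p_{-1}=0$ this is exactly the asserted bound $dq^{0}+p_{-1}=d$. For $m\ge 2$ I would write $V=\Ze(f)$ with $f$ homogeneous of degree $d$, and split the argument according to whether $f$ has a linear factor over $\Fq$, i.e.\ whether $V$ contains a rational hyperplane as a component.

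First, suppose $f=\ell\cdot g$ with $H=\Ze(\ell)$ a rational hyperplane contained in $V$ and $\deg g=d-1$. Then $V(\Fq)$ is the disjoint union of $H(\Fq)$ and the points of $V$ lying in the affine chart $\AA^m=\PP^m\setminus H$; on that chart $\ell\ne 0$, so $f$ and $g$ have the same zeros, and these affine points form an affine hypersurface of degree at most $d-1<q$. Applying Theorem~\ref{ore} bounds them by $(d-1)q^{m-1}$, whence $|V(\Fq)|\le p_{m-1}+(d-1)q^{m-1}=q^{m-1}+p_{m-2}+(d-1)q^{m-1}=dq^{m-1}+p_{m-2}$, exactly as required. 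This case already covers $d=1$ and imposes no restriction beyond $d\le q$.

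Second, suppose $f$ has no linear factor over $\Fq$; then $d\ge 2$ and no rational hyperplane lies in $V$. I would pick any rational hyperplane $H$: the intersection $V\cap H$ is a degree-$d$ hypersurface in $H\cong\PP^{m-1}$, bounded by $dq^{m-2}+p_{m-3}$ via the inductive hypothesis, while the points of $V$ off $H$ form an affine hypersurface $S$ of degree $d$ in $\AA^m$. The crucial observation is that $|S(\Fq)|$ is strictly less than the Ore maximum $dq^{m-1}$: if equality held, then $S$ would be a union of (at most $d$) rational affine hyperplanes, forcing $f$ to be, up to a nonzero scalar, a product of $d$ rational linear forms and contradicting the standing assumption. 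Hence $|S(\Fq)|<dq^{m-1}$, and Theorem~\ref{G} gives $|S(\Fq)|\le dq^{m-1}-(d-1)q^{m-2}$. Adding the two contributions yields $|V(\Fq)|\le (dq^{m-2}+p_{m-3})+(dq^{m-1}-(d-1)q^{m-2})=dq^{m-1}+q^{m-2}+p_{m-3}=dq^{m-1}+p_{m-2}$, completing the induction.

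The main obstacle is the extremal analysis in the second case: Geil's theorem applies only once one knows that $S$ does not already attain Ore's bound, and justifying this strict inequality requires the classification of affine hypersurfaces of degree $d<q$ achieving $dq^{m-1}$ points as unions of hyperplanes (equivalently, the description of minimum-weight codewords of the generalized Reed--Muller codes, going back to Kasami--Lin--Peterson~\cite{KLP}). A secondary technical point is the endpoint $d=q$, where Theorems~\ref{ore} and~\ref{G} are stated only in degree $<q$; the first case still goes through when $d=q$, but the remaining sliver (degree $q$ with no rational linear factor) would need a separate treatment.
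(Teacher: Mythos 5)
The paper itself never proves Theorem \ref{S}: it is imported from Serre and S{\o}rensen, with \cite{DG} cited for a modern proof, so your attempt can only be judged on its own merits. Your base case $m=1$ and your first case (where $f$ has a rational linear factor) are correct and complete, and they do cover all of $1 \le d \le q$: Theorem \ref{ore} applies to the dehomogenized degree-$(d-1)$ polynomial on the affine chart even when $d=q$, and the arithmetic $p_{m-1}+(d-1)q^{m-1}=dq^{m-1}+p_{m-2}$ is right.

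The problems are concentrated in your second case, and one of them is a genuine gap. First, the strict inequality $|S(\Fq)|<dq^{m-1}$ is not self-contained: it requires knowing that an affine hypersurface of degree $d<q$ attaining Ore's bound is a union of $d$ rational (parallel) hyperplanes. That statement is true, but it is the Delsarte--Goethals--MacWilliams characterization of minimum-weight codewords of generalized Reed--Muller codes, not a result of Kasami--Lin--Peterson (who computed only the minimum weight), and nothing in the present paper supplies it --- Theorems \ref{CR} and \ref{leducq} concern the \emph{second} weight and cannot substitute. Second, and more seriously, the endpoint $d=q$ is missing: when $\deg f=q$ and $f$ has no rational linear factor, Theorems \ref{ore} and \ref{G} do not apply to the affine part $S$, and you explicitly leave this ``sliver'' untreated. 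Since the theorem is asserted for $1\le d\le q$, and the paper genuinely uses it with $d=q$ (e.g.\ in Theorem \ref{thm2} and Proposition \ref{prop1}), this is a real hole, not a cosmetic one.

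Both defects vanish if you replace the Geil-based estimate in the second case by Zanella's Lemma, which the paper states. If $V$ contains no rational hyperplane, then for every $\Pi\in\widehat{\PP^m(\Fq)}$ the restriction of $f$ to $\Pi$ is a nonzero form of degree $d$, so the induction hypothesis gives $|V(\Fq)\cap\Pi|\le dq^{m-2}+p_{m-3}$; Zanella's Lemma then yields $|V(\Fq)|\le \left(dq^{m-2}+p_{m-3}\right)q+1=dq^{m-1}+p_{m-2}$. This requires no strictness claim, no classification of extremal affine hypersurfaces, and works uniformly for all $1\le d\le q$; it is also exactly the mechanism the paper itself uses in Lemma \ref{propfor3} and Proposition \ref{rudim}.
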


\noindent It is implicit in the proof of Serre's inequality that the bound is attained only by hypersurfaces of degree $d$ that are a union of $d$ distinct hyperplanes in $\PP^m$ containing a common projective linear subspace of codimension two. In particular, when $m = 2$, Serre's inequality says that a projective plane curve of degree $d$, with $d \le q$, has at most $dq + 1$ many $\Fq$-rational points. Furthermore, the bound is attained by the curves of degree $d$ that are a union of $d$ distinct lines intersecting at a common point. 

In a series of three papers \cite{HK1, HK2, HK3}, Homma and Kim proved the following Theorem, settling completely a conjecture made by P. Sziklai towards the maximum number of points on a projective plane curve that does not contain a line. This result will play a crucial role in deriving our results in the latter part of this article. 

\begin{theorem}[\cite{HK1, HK2, HK3}]\label{hk}
    Let $\C$ be a projective plane curve of degree $d$ defined over $\Fq$ containing no lines defined over $\Fq$. If $2 \le d \le q$ and $(d, q) \neq (4, 4)$, then $|\C (\Fq)| \le (d-1)q + 1$.
\end{theorem}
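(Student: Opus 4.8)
The plan is to bound $|\C(\Fq)|$ by induction on $d$, first reducing to the case that $\C$ is $\Fq$-irreducible and then treating irreducible curves by combining an elementary incidence count with Stöhr--Voloch theory. \textbf{Reduction to the irreducible case.} If $\C$ decomposes over $\Fq$ as $\C = \C_1 \cup \C_2$ with $\deg \C_i = d_i$ and $d_1 + d_2 = d$, then neither $\C_i$ contains an $\Fq$-line, so the inductive hypothesis gives $|\C_i(\Fq)| \le (d_i - 1)q + 1$ (each $\Fq$-component has degree $\ge 2$, since a degree-one $\Fq$-component would be a forbidden $\Fq$-line). As $\C(\Fq) = \C_1(\Fq) \cup \C_2(\Fq)$,
\[
|\C(\Fq)| \le |\C_1(\Fq)| + |\C_2(\Fq)| \le (d-2)q + 2 \le (d-1)q + 1,
\]
the last inequality holding for every $q \ge 1$. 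Thus it suffices to treat $\Fq$-irreducible curves; and if such a curve fails to be geometrically irreducible, its rational points must lie on the intersection of its conjugate geometric components and are therefore very few, so the essential case is that $\C$ is geometrically irreducible of degree $d \ge 3$.

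Next comes the elementary counting. Fix a smooth point $P \in \C(\Fq)$. The $q+1$ lines of $\PP^2$ through $P$ partition $\C(\Fq) \setminus \{P\}$; since $\C$ has no $\Fq$-line, each such line meets $\C$ in a degree-$d$ divisor, hence carries at most $d-1$ points of $\C(\Fq)$ besides $P$, while the tangent line at $P$ carries at most $d-2$. This yields the crude estimate $|\C(\Fq)| \le (d-1)q + (d-1)$. \emph{The entire difficulty is to remove the surplus $d-2$.} I would do this by a global double count of the incidences $(P,\ell)$ with $P \in \C(\Fq)$ and $\ell$ a \emph{full} line (meeting $\C(\Fq)$ in the maximal number of points permitted by its contact with $\C$), showing that full lines are too scarce for the crude bound to be saturated at every point simultaneously. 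Tracking the tangent lines, flexes, and the contribution of the singular points of $\C$ is the delicate bookkeeping here.

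For large degree I would instead invoke Stöhr--Voloch theory. For a Frobenius-classical geometrically irreducible plane curve the Stöhr--Voloch bound reads $|\C(\Fq)| \le \tfrac12 d(d+q-1)$, and an elementary manipulation shows
\[
\tfrac12 d(d+q-1) \le (d-1)q + 1 \iff q(d-2) \ge (d-2)(d+1),
\]
that is, precisely when $q \ge d+1$. Hence Stöhr--Voloch already settles every classical curve with $d \le q-1$, and the two methods together cover the bulk of the admissible range $2 \le d \le q$.

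The hard part will be exactly the residual cases that escape both arguments. First, the boundary degree $d = q$, where the classical Stöhr--Voloch bound is one step too weak and the elementary count must be pushed to its limit. Second, the Frobenius-non-classical curves, for which the order sequence is nonstandard and the Stöhr--Voloch estimate must be replaced by its non-classical analogue and analysed separately. Third, the small fields, where these refined inequalities degrade and a finite, essentially explicit, check becomes unavoidable. It is in this last regime that the genuine sporadic exception $(d,q) = (4,4)$ emerges, and isolating it --- proving that it is the \emph{only} exception --- is the true content of the theorem and the reason the full argument requires the sustained case analysis of \cite{HK1, HK2, HK3}.
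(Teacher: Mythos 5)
Your preliminary reductions are correct as far as they go: the induction over $\Fq$-reducible curves works (indeed $(d-2)q+2 \le (d-1)q+1$ for every $q \ge 1$, and the forbidden pair $(4,4)$ never arises for proper components, since $d \le q$ forces components of degree at most $2$ when $q=4$), the Bézout argument disposes of $\Fq$-irreducible but geometrically reducible curves, and your equivalence $\tfrac12 d(d+q-1) \le (d-1)q+1 \iff q \ge d+1$ (for $d>2$) is accurate. Be aware, however, that the paper you are being compared against contains no proof of this statement at all: Theorem \ref{hk} is quoted from \cite{HK1, HK2, HK3}, because it is a deep theorem (the resolution of Sziklai's conjecture), not a lemma the paper re-derives.

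Precisely for that reason, what your proposal leaves open is not a residue of corner cases; it is the theorem itself. (i) The incidence double-count that is supposed to remove the surplus $d-2$ from the elementary estimate $(d-1)q+(d-1)$ is never formulated: no definition of ``full'' line is fixed and no inequality is derived, and this is exactly where tangent lines, flexes and singular points interact in a way that cannot be waved at. (ii) Frobenius non-classical curves are not a peripheral nuisance to be ``analysed separately'': they are the extremal objects. The Hermitian curve of degree $\sqrt{q}+1$ is Frobenius non-classical and attains $(d-1)q+1$ exactly, and, as Remark \ref{attained} of the paper records, for $2 \le d \le q-1$ every curve attaining the bound apart from conics and degree-$(q-1)$ curves is Frobenius non-classical; so replacing Stöhr--Voloch by ``its non-classical analogue'' is where the real work lies, not an afterthought. (iii) The boundary case $d=q$, where your Stöhr--Voloch inequality no longer suffices, is untouched. (iv) Nothing in your argument produces the exception $(d,q)=(4,4)$ or shows it is unique; ``a finite check becomes unavoidable'' is a promissory note, since one must first reduce the problem to an explicitly finite list of pairs $(d,q)$, which your estimates do not accomplish. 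Finally, even the portion you do claim has a technical gap: the bound $\tfrac12\bigl(2g-2+(q+2)d\bigr)$ applies to the nonsingular model of the curve, and transferring it to a possibly singular plane curve requires trading rational singular points against the genus drop, a step you omit. In short, your text is a faithful road map of the Homma--Kim strategy, but a road map in which the content of all three of their papers remains to be filled in.
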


\begin{remark}\label{attained}\normalfont
    For $2 \le d \le q-1$, it is evident from the proofs in \cite{HK2, HK3}, that the curves of degree $d$ attaining the bound in Theorem \ref{hk} can be divided into two main classes. 
    \begin{enumerate}
        \item[(a)]  \textit{Frobenius Classical:} In this case, the degrees of such curves are either $2$ or $q-1$. In the first case, when the curve is of degree two, it is, in fact, a nonsingular conic. 
        \item[(b)] \textit{Frobenius Non-classical:} In this case, if $q = p^e$, as mentioned above, the possible degrees of curves attaining the bounds are $p^{h} + 1$ for some $\frac{e}{2} \le h < e$. 
    \end{enumerate} 
    It is beyond the scope of this article to go into the theory of Frobenius classical curves. We refer the readers to \cite{HV, SV, HK2, HK3} and the references therein for a detailed understanding of these notions. Let us illustrate the above-mentioned criteria in two particular cases. First, let us assume that $p$ is an odd prime. Also, suppose that $\C$ is a projective plane curve of degree $d$, with $2 \le d <q-1$ and $|\C(\Fq)| = (d-1)q + 1$. 
    \begin{enumerate}
        \item[(i)] If $q = p$, it follows that $\C$ must be a nonsigular conic.
   \item[(ii)] If $q = p^2$, then either $d = 2$ or $d = p+1$. It follows as above, that if $d = 2$, then $\C$ is a nonsingular conic. Hirschfeld, Storme, Thas and Voloch showed that \cite[Theorem 8]{HSTV}  if $q$ is a perfect square with $q \neq 4$, then any plane curve of degree $\sqrt{q}+1$ that does not contain a line and admits $q^{\frac{3}{2}} + 1$ many $\Fq$-rational points is a nonsingular Hermitian curve. Consequently, when $d = p + 1$, the curve $\C$ is a nonsingular Hermitian curve. 
    \end{enumerate} 
    We remark that classifying all the projective plane curves attaining the bound in Theorem \ref{hk} is a very interesting open problem. 
\end{remark}

When $m \ge 3$, the following result, popularly known as the \emph{Homma-Kim elementary bound} \cite[Theorem 1.2]{HKel}, gives an upper bound on the number of $\Fq$-rational points that a projective hypersurface of degree $d$ may have in $\PP^m$. This bound also occurs, albeit in a different avatar, in \cite[Proposition 1]{CN}.

\begin{theorem}\cite[Theorem 1.2]{HKel}\label{hkel}
    Let $\X$ be a projective hypersurface in $\PP^m$ of degree $d$. 
    If $\X$ does not contain a hyperplane defined over $\Fq$, then $|\X(\Fq)| \le (d-1)q^{m-1} + dq^{m-2} + p_{m-3}$. 
\end{theorem}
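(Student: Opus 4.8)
The plan is to prove $|\X(\Fq)| \le B_m$, where I abbreviate $B_m := (d-1)q^{m-1} + dq^{m-2} + p_{m-3}$, by induction on $m$, using the plane-curve estimate of Theorem \ref{hk} as the base case $m=2$. There $B_2 = (d-1)q + d$, which follows with room to spare from the Sziklai bound $(d-1)q+1$; the single exceptional pair $(d,q)=(4,4)$ is harmless because the known extremal quartic still has fewer than $B_2$ points. For the inductive step I would split according to whether $\X$ contains a linear subspace $W$ of codimension two defined over $\Fq$. This dichotomy is the organizing idea: the naive estimates — splitting $\PP^m$ into an affine chart plus a hyperplane at infinity and applying Ore (Theorem \ref{ore}) to the chart, or bounding each hyperplane section by Serre's inequality (Theorem \ref{S}) — overshoot the target by a term of order $q^{m-1}$. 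The whole force of the no-hyperplane hypothesis is to remove exactly that term, and it only becomes visible after the correct reduction.

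In the first case, where $\X$ contains no such $W$, every $\Fq$-hyperplane $H$ meets $\X$ in a degree-$d$ hypersurface of $H \cong \PP^{m-1}$ (as $H \not\subseteq \X$) that again contains no hyperplane of $H$: a hyperplane of $H$ lying in $\X$ would be a codimension-two linear subspace of $\PP^m$ contained in $\X$, which is excluded. Thus the inductive hypothesis gives $|\X(\Fq)\cap H| \le B_{m-1}$ for all $p_m$ hyperplanes, and the incidence identity $p_{m-1}|\X(\Fq)| = \sum_H |\X(\Fq)\cap H|$ yields $|\X(\Fq)| \le \tfrac{p_m}{p_{m-1}}B_{m-1}$. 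A short computation using $p_m = qp_{m-1}+1$, the identity $B_m - qB_{m-1}=1$, and $B_{m-1} < p_{m-1}$ then gives $|\X(\Fq)| \le B_m - 1 + B_{m-1}/p_{m-1} < B_m$, so here the bound even holds strictly.

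In the second case there is a codimension-two $W \subseteq \X$, and I would run the pencil of the $q+1$ $\Fq$-hyperplanes $H_0,\dots,H_q$ through $W$. Since $\X$ has no hyperplane component, each restriction $F|_{H_i}$ is a nonzero degree-$d$ form vanishing on $W$, hence divisible by the linear form cutting $W$ out of $H_i$; so $\X\cap H_i = W \cup R_i$ with $R_i$ of degree $d-1$. As the sets $H_i\setminus W \cong \AA^{m-1}$ partition $\PP^m \setminus W$, this gives the clean count $|\X(\Fq)| = p_{m-2} + \sum_{i=0}^q |R_i(\Fq)\cap \AA^{m-1}|$. Each summand is the affine part of a hypersurface of degree $\le d-1 < q$, so Ore's theorem bounds it by $(d-1)q^{m-2}$, and $p_{m-2} + (q+1)(d-1)q^{m-2}$ collapses exactly to $B_m$.

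The crux, and the step I expect to cost the most, is the degree drop in the second case: only by passing to the pencil through a contained codimension-two subspace does the residual degree fall to $d-1$, which is precisely what makes Ore's $(d-1)q^{m-2}$ match the leading constant on the nose rather than producing the wasteful $dq^{m-2}$ that Serre would give on the sections. The delicate part is verifying that the lower-order terms ($p_{m-2}$ against the $p_{m-3}$ buried in $B_m$) balance exactly, confirming that the two cases are genuinely exhaustive, and checking that the extremal configurations — such as a smooth quadric in $\PP^3$, which lands in the second case and meets $B_3 = (q+1)^2$ with equality — are consistent. By contrast, the averaging argument in the first case is routine once the section hypothesis has been checked.
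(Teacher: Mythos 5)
Your proof is correct; the caveat is that the paper never proves Theorem \ref{hkel} at all --- it is quoted from Homma--Kim \cite{HKel} --- so the fair comparison is with the paper's own arguments of exactly this type, namely Lemma \ref{propfor3} and Proposition \ref{rudim}, which sharpen this bound by the same induction. Writing $B_m := (d-1)q^{m-1}+dq^{m-2}+p_{m-3}$, your skeleton coincides with theirs: the dichotomy according to whether $\X$ contains an $\Fq$-linear subspace $W$ of codimension two; in the affirmative case the pencil of $q+1$ hyperplanes through $W$, the factorization $F|_{H_i}=L_W\cdot G_i$ forcing the residual degree $d-1$, and Ore's bound (Theorem \ref{ore}) on each affine piece, which sums exactly to $B_m$; in the negative case, the inductive bound $B_{m-1}$ on every hyperplane section. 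The differences are two. First, where you average over all hyperplanes via the incidence identity $p_{m-1}|\X(\Fq)|=\sum_H |\X(\Fq)\cap H|$ and the estimate $B_{m-1}<p_{m-1}$ (valid for $d\le q$), the paper invokes Zanella's Lemma, which gives $|\X(\Fq)|\le qB_{m-1}+1=B_m$ directly; both are fine, and your version even yields strict inequality in that case. Second, your base case $m=2$ imports the Sziklai bound (Theorem \ref{hk}), a deep result that took Homma and Kim three papers to establish; within this paper that is legitimate (Theorem \ref{hk} is likewise quoted), and your treatment of the exceptional pair $(d,q)=(4,4)$ is right (the extremal quartic over $\mathbb{F}_4$ has $14<16=B_2$ points), but it runs against the spirit of an ``elementary'' bound. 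The base case has a cheap, exception-free proof: pick $P\in\C(\Fq)$; each of the $q+1$ $\Fq$-lines through $P$ meets the line-free curve $\C$ in at most $d$ points by B\'ezout, so $|\C(\Fq)|\le 1+(q+1)(d-1)=(d-1)q+d=B_2$. Finally, note that your argument implicitly assumes $2\le d\le q$ (needed both for Theorem \ref{hk} and for Ore in degree $d-1$); this matches the paper's standing hypothesis, and nothing is lost since for $d\ge q+1$ one has $B_m\ge p_m$, making the statement trivial.
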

\noindent In the same paper, the authors illustrated some examples of hypersurfaces that attain the elementary bound. Later, Tironi \cite{T} determined all the hypersurfaces without a linear component reaching the bound. Since we are mostly interested in hypersurfaces of degree at most $q-1$, we partially record his result below. 

\begin{theorem}\cite[Theorem 1]{T}\label{TT}
    Let $2 \le d \le q-1$, $m \ge 3$, and $\X$ be a projective hypersurface of degree $d$ in $\PP^m$ that attains the Homma-Kim elementary bound. 
    Then one of the following holds: 
    \begin{enumerate}
        \item[(a)] $d=2$, and $\X$ is projectively equivalent to a cone over the hyperbolic quadric surface, i.e., a surface in $\PP^3$ given by the vanishing set of $x_0x_2 + x_1x_3$. 
        \item[(b)] $q$ is a square and $d = \sqrt{q} + 1$, and $\X$ is projectively equivalent to a cone over the nondegenerate Hermitian surface, i.e., a surface in $\PP^3$ given by the vanishing set of $x_0^{\sqrt{q} + 1}+ x_1^{\sqrt{q}+1} + x_2^{\sqrt{q} + 1}+ x_3^{\sqrt{q}+1}$. 
    \end{enumerate}
\end{theorem}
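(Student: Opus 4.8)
The plan is to argue by induction on $m \ge 3$, reducing the higher-dimensional cases to the base case $m = 3$ through a hyperplane-section analysis. Two principles drive the argument: first, the elementary bound of Theorem~\ref{hkel} is built from bounds on hyperplane sections, so equality in the bound should force the generic section to be extremal; and second, the classification of extremal plane curves recorded in Theorem~\ref{hk} and Remark~\ref{attained} severely restricts the admissible degrees $d$.

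First I would settle the base case $m = 3$. Let $\X \subset \PP^3$ be a surface of degree $d$ with no plane component satisfying $|\X(\Fq)| = (d-1)q^2 + dq + 1$. Since each $\Fq$-point lies on exactly $p_2$ planes and each plane cuts $\X$ in a plane curve of degree $d$, double counting yields
\[
\sum_{H} |(\X \cap H)(\Fq)| = |\X(\Fq)| \cdot p_2 = \big((d-1)q^2 + dq + 1\big)(q^2 + q + 1),
\]
the sum being over the $p_3$ planes $H$ of $\PP^3$. For a single plane, Serre's inequality (Theorem~\ref{S}) gives $|(\X \cap H)(\Fq)| \le dq + 1$ when the section contains an $\Fq$-line, while Theorem~\ref{hk} gives $|(\X \cap H)(\Fq)| \le (d-1)q + 1$ when it does not. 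Splitting the planes into these two classes and estimating, I would argue that, because $\X$ has no plane component, only $O(q^2)$ planes can contain a line of $\X$, so the prescribed total can be met only if the generic (line-free) section is itself extremal, attaining $(d-1)q + 1$. By Remark~\ref{attained}, an extremal line-free curve of degree $d$ forces $d = 2$ (a nonsingular conic) or $d = p^{h} + 1$ with $e/2 \le h < e$ (a Frobenius non-classical curve).

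It then remains to reconstruct $\X$ from its family of extremal sections and, in doing so, to eliminate every non-classical degree except $d = \sqrt{q} + 1$. For $d = 2$ the line-free sections are conics and the line-containing sections are line-pairs; patching these together I would recover a nonsingular quadric, projectively equivalent to the hyperbolic quadric $x_0 x_2 + x_1 x_3$. For $d = \sqrt{q} + 1$, the extremal line-free sections are nonsingular Hermitian curves by \cite{HSTV} (see Remark~\ref{attained}), and reassembling them identifies $\X$ with the nondegenerate Hermitian surface; the same reconstruction should show that no degree $d = p^{h} + 1$ with $h > e/2$ admits enough compatible extremal sections to fill out a surface, which removes those degrees.

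For the inductive step $m > 3$, I would run the identical double count one dimension higher: $\sum_{H} |(\X \cap H)(\Fq)| = |\X(\Fq)| \cdot p_{m-1}$ over the $p_m$ hyperplanes $H \cong \PP^{m-1}$, each section now bounded by $(d-1)q^{m-2} + dq^{m-3} + p_{m-4}$ (Theorem~\ref{hkel}) when it contains no hyperplane of $H$, and by Serre's bound otherwise. Equality should again force a generic hyperplane section to attain the elementary bound in $\PP^{m-1}$; by induction such a section is a cone over one of the two surfaces in $\PP^3$, and I would show these cone structures vary compatibly with $H$, forcing $\X$ to be a cone over a fixed surface in $\PP^3$ with a vertex of one higher dimension. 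The main obstacle throughout is the equality analysis of the double count: controlling precisely how many sections fall into the Serre regime, acquiring a linear component absent from $\X$, against the elementary regime. Since the extremal surfaces are genuinely ruled by lines, both regimes occur, and the delicate point is the rigidity argument showing that the quadric and Hermitian configurations are the only ways to balance the two contributions against the prescribed total, and that no other degree can sustain the required supply of extremal sections.
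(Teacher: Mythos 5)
Note first that the paper does not prove this statement at all: it is quoted verbatim from Tironi \cite[Theorem 1]{T}, so your attempt must stand on its own as a proof of that classification, and as written it does not.

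The central quantitative step of your base case points in the wrong direction. Carrying out your double count correctly, with line-free sections bounded by $(d-1)q+1$ (Theorem \ref{hk}) and line-containing sections bounded by $dq+1$ (Theorem \ref{S}), the identity $\sum_{H}|(\X\cap H)(\Fq)| = |\X(\Fq)|\,p_2$ forces the number $N$ of planes whose section contains an $\Fq$-line to satisfy
\begin{equation*}
N\cdot q \;\ge\; |\X(\Fq)|\,p_2 - p_3\bigl((d-1)q+1\bigr) \;=\; \bigl((d-1)q^2+dq+1\bigr)q,
\end{equation*}
that is, $N \ge |\X(\Fq)|$. So equality in the elementary bound demands an enormous supply of line-containing sections, each attaining Serre's bound --- not a generic extremal line-free section. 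Your claim that ``only $O(q^2)$ planes can contain a line of $\X$'' is false exactly in the cases you must characterize: for the nondegenerate Hermitian surface ($d=\sqrt{q}+1$) the planes containing lines are its tangent planes, one per point, hence about $q^{5/2}$ of them. Worse, if that $O(q^2)$ claim held with any fixed constant, then for $d$ large the inequality $N \ge (d-1)q^2+dq+1$ would be violated and your argument would ``show'' that no hypersurface attains the bound at all, contradicting the existence of the two extremal families.

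Beyond this, the steps that constitute the actual substance of Tironi's theorem are placeholders. The elimination of the Frobenius non-classical degrees $d=p^h+1$ with $e/2 < h < e$ --- the heart of the matter, since Remark \ref{attained} leaves all of these open --- is dismissed with ``the same reconstruction should show.'' The reconstruction arguments themselves (that a surface whose extremal sections are Hermitian curves must be the Hermitian surface, and that in the inductive step the cone structures of hyperplane sections ``vary compatibly'' so that $\X$ is itself a cone) are asserted, not proved; the Hermitian characterization in particular is a deep result in its own right, not a patching exercise. What you have is a plausible strategy outline whose every difficult step is deferred, together with a quantitative lemma that is both unjustified and, in the relevant cases, false.
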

Before we conclude this subsection, we revisit a very useful Lemma by Zanella. 
\begin{theorem}[Zanella's Lemma] \cite{Z}
Let $S \subset \PP^m (\Fq)$ and $a = \max \{|S(\Fq) \cap \Pi| : \Pi \in \widehat{\PP^m(\Fq)}\}$, then $|S(\Fq)| \le aq + 1$.
\end{theorem}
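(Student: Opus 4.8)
The plan is to prove this via a double-counting argument on the incidence between the points of $S$ and the rational hyperplanes of $\PP^m$. Since $S \subseteq \PP^m(\Fq)$ we have $S(\Fq) = S$, so it suffices to bound $|S|$. If $S = \emptyset$ the bound is trivial, so assume $S \neq \emptyset$, whence $a \ge 1$. I would count the set of incident pairs
\[
\mathcal{I} = \{(P, \Pi) : P \in S,\ \Pi \in \widehat{\PP^m(\Fq)},\ P \in \Pi\}
\]
in two different ways.

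Summing over hyperplanes first: there are exactly $p_m$ rational hyperplanes in $\PP^m$ (they are parametrized by the dual projective space $(\PP^m)^\ast \cong \PP^m$), and by the definition of $a$ each of them meets $S$ in at most $a$ points. Hence $|\mathcal{I}| = \sum_{\Pi} |S \cap \Pi| \le a\, p_m$. Summing over points instead: the crucial elementary fact is that every point $P \in \PP^m(\Fq)$ lies on exactly $p_{m-1}$ rational hyperplanes. Indeed, writing $P = [a_0 : \cdots : a_m]$, the hyperplanes $\{\sum_i c_i x_i = 0\}$ through $P$ correspond to the points $[c_0 : \cdots : c_m]$ of the dual space satisfying the single linear condition $\sum_i c_i a_i = 0$, which cut out a hyperplane of the dual $\PP^m$ and therefore number exactly $p_{m-1}$. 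Consequently $|\mathcal{I}| = \sum_{P \in S} p_{m-1} = |S|\, p_{m-1}$.

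Combining the two counts gives $|S|\, p_{m-1} \le a\, p_m$, i.e. $|S| \le a\, p_m / p_{m-1}$. Writing $p_m = q\, p_{m-1} + 1$, this reads $|S| \le aq + a/p_{m-1}$. To conclude that the fractional term does not push the bound past $aq + 1$, I would invoke the trivial observation that any rational hyperplane contains exactly $p_{m-1}$ rational points, so $a \le p_{m-1}$ and hence $a/p_{m-1} \le 1$. Therefore $|S| \le aq + 1$.

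The argument is genuinely short, and there is no serious obstacle; the one place demanding care is the exact incidence count --- establishing that each rational point lies on precisely $p_{m-1}$ rational hyperplanes via the duality between points and hyperplanes --- together with the final observation $a \le p_{m-1}$ that converts the harmless fractional remainder $a/p_{m-1} \le 1$ into the clean integer bound $aq+1$.
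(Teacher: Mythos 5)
Your proof is correct; every step checks out. Note first that the paper does not actually prove this lemma --- it is stated with a citation to Zanella's article and used as a black box --- so there is no internal proof to compare yours against, and your argument stands as a genuine self-contained justification. The incidence count is right: there are $p_m$ rational hyperplanes (dual projective space), each meets $S$ in at most $a$ points by definition of $a$, and each point of $\PP^m(\Fq)$ lies on exactly $p_{m-1}$ rational hyperplanes (the dual condition $\sum_i c_i a_i = 0$ cuts a hyperplane in the dual space), giving $|S|\,p_{m-1} \le a\,p_m = a(q\,p_{m-1}+1)$; combined with the observation $a \le p_{m-1}$, this yields $|S| \le aq + a/p_{m-1} \le aq+1$. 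In fact your count proves the marginally sharper rational bound $|S| \le a\,p_m/p_{m-1}$, with equality in the final rounding step only when $a = p_{m-1}$. For comparison, the argument one usually sees for bounds of this type in the surrounding literature --- and the style this paper itself employs in Lemma~\ref{propfor3} and Proposition~\ref{rudim} --- is local rather than global: one fixes a center (a point of $S$, or a codimension-two subspace inside a hyperplane attaining the maximum $a$) and decomposes $\PP^m(\Fq)$ by the pencil of $q+1$ hyperplanes through that center, so that each point off the center lies in exactly one member of the pencil. Your global double count over \emph{all} point--hyperplane incidences avoids having to choose such a center and needs no case distinction beyond $S = \emptyset$; the pencil argument, on the other hand, generalizes more readily to the refined statements the paper actually needs, where one member of the pencil is known to meet the set in strictly fewer points.
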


\subsection{(Projective) Reed-Muller Codes}\label{introtocode}
An $[n, k]_q$  linear code (or simply a code, when $n, k, q$ are clear from the context) $C$ is a $k$-dimensional linear subspace of $\Fq^n$. On the vector space $\Fq^n$, one defines a metric, known as the Hamming metric, as follows: 
For ${\bf a} = (a_1, \dots, a_n), {\bf b} = (b_1, \dots, b_n) \in \Fq^n$, we define the Hamming distance between ${\bf a}$ and ${\bf b}$, denoted $d({\bf a}, {\bf b})$ as the number of coordinates where they differ, namely, $d ({\bf a}, {\bf b}) = \#\{i : a_i \neq b_i\}$. If $C$ is a code, then the minimum distance of $C$, denoted by $d (C)$, is defined as 
$$d(C) :=\min \{ d ({\bf a}, {\bf b}) : {\bf a}, {\bf b} \in C, {\bf a} \neq {\bf b} \}.$$
Given an element, also known as a codeword, ${\bf a} \in C$, we define the Hamming weight of ${\bf a}$, denoted $w ({\bf a})$, as $d ({\bf a}, 0)$, that is, the number of nonzero coordinates of ${\bf a}$. The minimum weight of a linear code $C$, as clear from the nomenclature, is defined as the minimum of the Hamming weights of all nonzero codewords of $C$. Using the linearity of the codes, it is easy to prove that the minimum weight and the minimum distance of a code are equal. An $[n, k, d]_q$ linear code with minimum weight $d$ is called an $[n, k, d]_q$ linear code. 

\subsubsection{Generalized Reed-Muller codes}
Let $m, d$ be positive integers and $n = q^m$. In this article, we will always assume that $1 \le d \le q-1$. We fix an enumeration $\{P_1, \dots, P_n\}$ of the elements of $\AA^m (\Fq)$. Let $R_{\le d}$ be the set of all polynomials in  $\Fq[x_1, \dots, x_m]$ of total degree at most $d$. This makes $R_{\le d}$ a vector space over $\Fq$. Define an evaluation map
$$ev : R_{\le d} \to \Fq^n \ \ \ \text{as} \ \ \ f \mapsto ev (f) := (f(P_1), \dots, f(P_n)).$$
The map $ev$ is a linear map and the image of $ev$ is a linear subspace of $\Fq^n$. We define the generalized Reed-Muller code, denoted by $RM (d, m)$ as $ev (R_{\le d})$. Note that the determination of the Hamming weight of a codeword $ev (f)$ of $RM(d, m)$ is equivalent to the determination of the number of $\Fq$-rational zeroes of $f$. Consequently, the minimum weight of $RM (d, m)$ is given by $q^m - dq^{m-1}$, thanks to Theorem \ref{ore}. Similarly, the second minimum weight of $RM (d, m)$, that is, the smallest weight of a codeword having Hamming weight larger than the minimum weight, is given by $q^m - dq^{m-1} + (d-1)q^{m-2}$, as a consequence of Theorem \ref{G}. Determining the weight distribution of the Reed-Muller codes is undoubtedly a fascinating problem. Mathematically speaking, this reduces the problem of determining all possible numbers of $\Fq$-rational zeroes of a polynomial of degree at most $d$ in $m$ variables. 

\subsubsection{Projective Reed-Muller codes}
Let $m, d$ be positive integers and $n = p_m:= q^m + q^{m-1} + \dots + 1$. In this article, we will always assume that $1 \le d \le q-1$. We fix an enumeration $\{P_1, \dots, P_n\}$ of the elements of $\PP^m (\Fq)$. Furthermore, we fix a representative of each point in $\PP^m (\Fq)$, by saying that the first nonzero coordinate of each point is $1$.
Let $S_{d}$ be the set of all homogeneous polynomials in  $\Fq[x_0, x_1, \dots, x_m]$ of total degree  $d$. This makes $S_{d}$ a vector space over $\Fq$. Define an evaluation map
$$Ev : S_d \to \Fq^n \ \ \ \text{as} \ \ \ F \mapsto Ev (F) := (F(P_1), \dots, F(P_n)).$$
The map $ev$ is a linear map, and the image of $ev$ is a linear subspace of $\Fq^n$. We define the projective Reed-Muller code, denoted by $PRM (d, m)$ as $Ev (S_d)$. As in the case with the generalized Reed-Muller codes, the Hamming weight of a codeword $Ev (F)$ of $PRM(d, m)$ is intrinsically related to the number of zeroes of $F$ in $\PP^m (\Fq)$. Consequently, the minimum weight of $PRM (d, m)$ is given by $p_m - dq^{m-1} - p_{m-2}$, thanks to Theorem \ref{S}. However, the complete determination of the second minimum weight of $PRM (d, m)$, the smallest weight of a codeword of $PRM (d, m)$ having a Hamming weight larger than the minimum weight, is not completely solved yet. One of the prominent goals of this article is to determine the second weight of $PRM (d, m)$ for $m = 2$. 

\section{Hypersurfaces}
In this section, we consider the set $\mathscr{S}$ of hypersurfaces of degree $d$, with $2 \le d \le q-1$ in $\PP^m$ that does not attain the bound in Serre's inequality and produce an upper bound for the number of points on hypersurfaces that belong to $\mathscr{S}$. Sboui showed \cite[Theorem 4.2]{Sbspec} that if $5 \le d \le \frac{q+2}{2}$, then 
$$|\X(\Fq)| \le dq^{m-1} + p_{m-2} - (d-2) q^{m-2},$$
for all $\X \in \mathscr{S}$. Furthermore, in the same paper, he proved that this bound is attained by a union of $d$ distinct hyperplanes. Carvalho and Neumann further investigated this in \cite{CN1}, where they showed that the upper bound holds for $2 \le d \le \frac{q+3}{2}$. It is not known yet whether, in the case $d = \frac{q+3}{2}$, the bound is still attained by hypersurfaces that are the union of $d$ distinct hyperplanes. In this article, we give an alternative proof of the above bound for $2 \le d \le \frac{q+3}{2}$ using Theorem \ref{G} and a rudimentary improvement of Theorem \ref{hkel}. Moreover, our proof also shows that the bound is attained by hypersurfaces that contain at least one hyperplane. We begin with the following simple observation, the idea of which is well-known among experts and appears in several forms in literature, for example, in \cite[Lemma 2.3(i)]{CN1}. We include a proof for the sake of completeness. 
\begin{proposition}\label{hyp}
Suppose $m \ge 2$ is an integer. Let $\X$ be a hypersurface in $\PP^m$ of degree $d$, with $3 \le d \le q$. 
 If $\X$ contains a hyperplane 
 and $|\X(\Fq)| < dq^{m-1} + p_{m-2}$, then $$|\X(\Fq)| \le dq^{m-1} + p_{m-2} - (d-2)q^{m-2}.$$  
 Moreover, if $\X$ is a hypersurface of degree $d$ such that $\X$ contains a hyperplane and $|\X(\Fq)| = dq^{m-1} + p_{m-2} - (d-2)q^{m-2}$, then $\X$ is a union of hyperplanes. 
\end{proposition}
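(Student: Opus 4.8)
The plan is to peel off the hyperplane and reduce the whole statement to the two known affine results, namely Geil's Theorem~\ref{G} and Leducq's Theorem~\ref{leducq}. Since $\X$ contains a hyperplane $H$, after a projective change of coordinates I may assume $H = \V(x_0)$, so the defining polynomial $F$ of $\X$ factors as $F = x_0 G$ with $G$ homogeneous of degree $d-1$. I identify $\A^m$ with the affine chart $\PP^m \setminus H = \{x_0 \neq 0\}$ and set $S = \X \cap \A^m = \V(g)$, where $g = G(1, x_1, \dots, x_m)$ has degree at most $d-1$. Every $\Fq$-point of $\X$ lies either on $H$ or in the affine chart, and these two sets are disjoint, which gives the exact count
$$|\X(\Fq)| = |H(\Fq)| + |S(\Fq)| = q^{m-1} + p_{m-2} + |S(\Fq)|.$$

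For the first inequality, the hypothesis $|\X(\Fq)| < dq^{m-1} + p_{m-2}$ translates directly into $|S(\Fq)| < (d-1)q^{m-1}$. Because $3 \le d \le q$ gives $2 \le d-1 < q$ and we have $m \ge 2$, Geil's Theorem~\ref{G} applies to the affine hypersurface $S$ of degree at most $d-1$ and yields $|S(\Fq)| \le (d-1)q^{m-1} - (d-2)q^{m-2}$. Substituting this back into the displayed identity produces exactly $|\X(\Fq)| \le dq^{m-1} + p_{m-2} - (d-2)q^{m-2}$, as required.

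For the \emph{moreover} part, equality in the target bound forces $|S(\Fq)| = (d-1)q^{m-1} - (d-2)q^{m-2}$. I would first argue that $\deg g = d-1$ exactly: if instead $\deg g \le d-2$, then Ore's Theorem~\ref{ore} would give $|S(\Fq)| \le (d-2)q^{m-1}$, which is strictly smaller than $(d-1)q^{m-1} - (d-2)q^{m-2}$ since $d \le q$ implies $q^{m-1} > (d-2)q^{m-2}$; this contradiction pins down the degree. Hence $S$ is a genuine degree-$(d-1)$ affine hypersurface attaining the second-weight value, so Leducq's Theorem~\ref{leducq} (valid as $2 \le d-1 \le q-1$) shows $S$ is of Type~I or Type~II, i.e.\ a union of $d-1$ distinct affine hyperplanes $\V(\ell_1), \dots, \V(\ell_{d-1})$. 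Since $\deg g = d-1$, the $\ell_j$ are distinct linear factors of $g$, whence $g = c\prod_{j} \ell_j$ for a constant $c$. Homogenizing and using $\deg g = \deg G = d-1$ (so that $G$ is the homogenization of $g$), I conclude that $G$ is a product of $d-1$ linear forms, and therefore $\X = H \cup \V(G)$ is a union of hyperplanes.

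The argument is essentially mechanical once the reduction to the affine complement of $H$ is in place; the genuinely delicate points are the degree bookkeeping at equality (ruling out $\deg g < d-1$, which is exactly where the hypothesis $d \le q$ is used, so that Leducq's classification is applicable) and the homogenization step that lifts the affine union of hyperplanes back to a projective one. I expect the degree count at equality to be the main obstacle, since everything else is a direct translation between the projective picture and the affine chart.
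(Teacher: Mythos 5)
Your proof is correct and follows essentially the same route as the paper's: peel off the hyperplane, translate the point count to the affine complement, apply Geil's Theorem~\ref{G} for the inequality, and Leducq's Theorem~\ref{leducq} for the equality case. You are in fact slightly more careful than the paper's own proof, which invokes Theorem~\ref{leducq} without verifying that the affine part has degree exactly $d-1$; your Ore-based degree argument fills in that bookkeeping.
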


\begin{proof}
  Let $\Pi$ be a hyperplane contained in $\X$. 
With a suitable linear change of coordinates, we may assume that $\Pi$ is the hyperplane at infinity. Then $\X' := \X \setminus \Pi$ is an affine hypersurface of degree at most $d-1$ in $\AA^m$. 
  Note that, 
  \begin{equation}\label{geil}
  |\X(\Fq)| = |\Pi (\Fq)| + |\X'(\Fq)|.
  \end{equation}
  Theorem \ref{ore} implies $|\X' (\Fq)| \le (d-1)q^{m-1}$. However, if the equality occurs, then $|\X(\Fq)| = dq^{m-1} + p_{m-2}$, which contradicts our hypothesis. Thus $|\X'(\Fq)| < (d-1)q^{m-1}$. Since $2 \le d - 1 \le q-1$, Theorem \ref{G} implies $|\X'(\Fq)| \le (d-1)q^{m-1} - (d-2)q^{m-2}$. Using \eqref{geil}, we see that $|\X (\Fq)| \le dq^{m-1} + p_{m-2} - (d-2)q^{m-2}$, as desired. Note that equality occurs if and only if $|\X'(\Fq)| = (d-1)q^{m-1} - (d-2)q^{m-2}$. Now the last assertion follows from Theorem \ref{leducq}.
\end{proof} 

\begin{remark}\label{HKsuffice}\normalfont
Note that the Homma-Kim elementary bound is stronger than the one in the above proposition when $3 \le d \le \frac{q+3}{2}$. Indeed, we have
\begin{align*}
 &\left(dq^{m-1} + p_{m-2} - (d-2)q^{m-2}\right) - \left((d-1)q^{m-1} + dq^{m-2} + p_{m-3}\right) \\
&=q^{m-1} + q^{m-2} - 2(d-1)q^{m-2} \\
&= q^{m-2} (q + 3 - 2d) \ge 0
\end{align*}
when $d \le \frac{q+3}{2}$. Moreover, if $d \le \frac{q+2}{2}$, then
\begin{equation}\label{strict}
   (d-1)q^{m-1} + dq^{m-2} + p_{m-3} < dq^{m-1} + p_{m-2} - (d-2)q^{m-2}.
\end{equation}
As explained in Theorem \ref{TT}, the Homma-Kim elementary bound is attained only in the cases when $d = 2$ and $d= \sqrt{q} + 1$, when $q$ is a perfect square. Thus, under the hypothesis $3 \le d \le q$, the Homma-Kim elementary bound can be attained only when $d = \sqrt{q} + 1$. Since $\sqrt{q} + 1 < \frac{q+3}{2}$, the strict inequality in \eqref{strict} is satisfied. As a consequence, the Homma-Kim elementary bound and Proposition \ref{hyp} shows that
if $3 \le d \le \frac{q+3}{2}$, and $\X$ is a hypersurface of degree $d$ in $\PP^m$ such that $|\X(\Fq)| < dq^{m-1} + p_{m-2}$, then $|\X (\Fq)| \le dq^{m-1} + p_{m-2} - (d-2)q^{m-2}$. Moreover, if $d \le \frac{q+2}{2}$, then bound is attained by $\X$ only if $\X$ contains a hyperplane. We note that this was already proved by Sboui (cf. \cite[Theorem 4.2]{Sbspec}) using a different methodology. We will extend these observations to the case $d \le \frac{q+3}{2}$ using an improvement of Homma-Kim elementary bound.
\end{remark}

We now present an improvement of the Homma-Kim elementary bound in the cases when $d \neq 2, \sqrt{q} + 1$. To warm up, let us first investigate the case when $m = 3$. 

\begin{lemma}\label{propfor3}
  Let $3 \le d \le q$ and  $\X$ be a surface of degree $d$ in $\PP^3$ containing no planes. If  $d \neq \sqrt{q} + 1$, then $$|\X(\Fq)| \le (d-1)q^{2} + dq + 1 - (d-2).$$      
\end{lemma}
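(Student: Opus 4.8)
The plan is to argue by a dichotomy on whether $\X$ contains a line defined over $\Fq$. If $\X$ contains no $\Fq$-line, then for every $\Fq$-plane $\Pi$ the section $\X \cap \Pi$ is a plane curve of degree $d$ having no $\Fq$-line as a component, since such a component would be a line lying on $\X$; hence Theorem \ref{hk} gives $|(\X \cap \Pi)(\Fq)| \le (d-1)q + 1$, and Zanella's Lemma yields $|\X(\Fq)| \le ((d-1)q + 1)q + 1 = (d-1)q^2 + q + 1$. As $q + 1 \le dq - d + 3$ for $d \ge 3$, this is already well inside the asserted bound, the only exceptional pair $(d,q) = (4,4)$ being absorbed by using the corresponding exceptional value in place of $(d-1)q + 1$.

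Now suppose $\X$ contains an $\Fq$-line $\lambda$ and consider the pencil of the $q + 1$ $\Fq$-planes $\Pi_0, \dots, \Pi_q$ through $\lambda$, writing $\X \cap \Pi_i = \lambda \cup \C_i'$ with $\C_i'$ the residual curve of degree $d - 1$. Every $\Fq$-point off $\lambda$ lies on a unique $\Pi_i$, and a point $Q \in \lambda(\Fq)$ lies on $\C_i'$ precisely when $Q$ has multiplicity at least two in $\X \cap \Pi_i$; since the order of vanishing can only grow under restriction, a smooth point of $\X$ on $\lambda$ produces this for exactly one $i$ (its tangent plane, which necessarily contains $\lambda$), whereas a singular point does so for all $q + 1$ indices. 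Writing $s$ for the number of $\Fq$-singular points of $\X$ on $\lambda$, this double count gives $|\X(\Fq)| = \sum_{i=0}^{q} |\C_i'(\Fq)| - sq$. Applying Serre's inequality (Theorem \ref{S}) to each residual, $|\C_i'(\Fq)| \le (d-1)q + 1$ with equality only for a union of $d - 1$ concurrent $\Fq$-lines, so $|\X(\Fq)| \le (d-1)q^2 + dq + 1 - sq - D$ with $D \ge 0$ the total Serre-deficit of the residuals. It now suffices to show $sq + D \ge d - 2$.

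If some $\Fq$-line of $\X$ runs through a singular point, I choose it as $\lambda$, so that $s \ge 1$ and $sq \ge q \ge d - 2$. Otherwise no $\Fq$-line of $\X$ meets the singular locus (in particular if $\X$ is smooth), and the deficit $D$ must do the work. The key local fact is that at a smooth point of a surface every line of the surface lies in the tangent plane, whose section there has multiplicity exactly two, so at most two lines of $\X$ pass through a smooth point. Hence for $d \ge 4$ no residual $\C_i'$ is Serre-extremal: its $d - 1 \ge 3$ concurrent lines would meet in a point carrying at least three lines of $\X$, which would then be a singular point with a line of $\X$ through it, contradicting the hypothesis. Thus each residual loses at least one point and $D \ge q + 1 \ge d - 2$. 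The borderline case $d = 3$, where two concurrent lines may meet at a smooth point, is handled instead by Tironi's theorem (Theorem \ref{TT}): as $d = 3 \neq 2, \sqrt{q} + 1$, the elementary bound of Theorem \ref{hkel} is not attained, giving the required saving $d - 2 = 1$.

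The step I expect to be most delicate is this last one — ruling out that all $q + 1$ residual curves are simultaneously Serre-extremal and extracting the exact deficit $d - 2$ — together with the multiplicity bookkeeping behind the identity $|\X(\Fq)| = \sum_{i} |\C_i'(\Fq)| - sq$, where one must correctly account for singular points on $\lambda$ and for the possibility that $\lambda$ is doubled in a tangent plane. The small exceptions $(d,q) = (4,4)$, $d = 3$, and $d = q$ (where Theorem \ref{TT} does not directly apply) should be disposed of by separate elementary arguments.
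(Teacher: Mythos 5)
Your no-line case is exactly the paper's (Homma--Kim plus Zanella), but your line case contains a genuine gap. The ``key local fact'' you invoke --- that at a smooth point of $\X$ the tangent-plane section has multiplicity \emph{exactly} two, hence at most two lines of $\X$ pass through a smooth point --- is false. The tangent-plane section has multiplicity \emph{at least} two, and it can be larger: the classical counterexamples are Eckardt points of smooth cubic surfaces. For instance, on the Fermat cubic $x_0^3+x_1^3+x_2^3+x_3^3=0$ over $\Fq$ with $q\equiv 1 \pmod 3$ and $p\neq 3$, the three $\Fq$-lines $\{x_0=-x_1,\ x_2=-\omega^j x_3\}$, $j=0,1,2$ ($\omega$ a primitive cube root of unity), meet at the smooth point $(1:-1:0:0)$. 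More generally, at a smooth point of a degree-$d$ surface the tangent-plane section can have multiplicity up to $d$ (consider $x_3x_0^{d-1}=f(x_1,x_2)$ at $(1:0:0:0)$ with $f$ a product of distinct linear forms), so a Serre-extremal residual $\C_i'$ --- a union of $d-1$ concurrent $\Fq$-lines --- can a priori be concurrent at a \emph{smooth} point $P$ of $\X$ whose tangent plane is $\Pi_i$. Consequently your deduction that, when $s=0$, no residual is Serre-extremal (hence $D\ge q+1\ge d-2$) is unproven for every $d\ge 4$; to repair it you would need to bound the number of Serre-extremal residuals by $q+3-d$, which is precisely the hard point, not a routine one. (Two smaller issues: your dichotomy needs the singular point to be $\Fq$-rational for $s\ge 1$ to hold --- this is salvageable since the concurrency point of an extremal residual is $\Fq$-rational --- and your deferred cases $d=q$ and $(d,q)=(4,4)$ are asserted, not argued.)

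It is worth seeing how the paper's proof sidesteps all of this. Instead of projective residuals, it writes $|\X(\Fq)| = |\ell(\Fq)| + \sum_{\Pi\in\B(\ell)} |\X(\Fq)\cap(\Pi\setminus\ell)|$, so points of $\ell$ are counted once and separately, and each piece $\X\cap(\Pi\setminus\ell)$ is an \emph{affine} curve of degree at most $d-1$: no multiplicity bookkeeping along $\ell$ is needed. Ore's bound (Theorem \ref{ore}) gives $(d-1)q$ per plane; if equality held in every plane, $\X$ would attain the Homma--Kim elementary bound, excluded by Tironi's classification (Theorem \ref{TT}) since $d\neq 2,\sqrt{q}+1$; and then Geil's second-weight theorem (Theorem \ref{G}) applied to the single non-extremal affine piece delivers the entire saving $d-2$ at once. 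In other words, Geil's gap theorem substitutes for your claim that \emph{every} residual loses a point --- one deficient plane suffices --- which is exactly what your argument cannot establish.
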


\begin{proof}
    Let $\X$ be a surface in $\PP^3$ and $\deg \X = d$. If $\X$ contains a line $\ell$ defined over $\Fq$, then we see that 
    \begin{equation}\label{one}
        |\X(\Fq)| = |\ell(\Fq)| + \sum_{\Pi \in \B(\ell)} |\X(\Fq) \cap (\Pi \setminus \ell)|, 
    \end{equation}
    where $\B(\ell)$ denotes the set of all planes in $\PP^3$ containing $\ell$. Note that $|\B(\ell)| = q+1$. Let $\Pi \in \B(\ell)$. Then $\Pi \setminus \ell$ is an affine plane. Since $d \le q$, it follows from our hypothesis that $\X$ does not contain $\Pi \setminus \ell$. Furthermore, since $\X$ contains $\ell$, we see that $\X \cap (\Pi \setminus \ell)$ is an affine curve of degree at most $d-1$. Now Theorem \ref{ore} implies that $|\X(\Fq) \cap (\Pi \setminus \ell)| \le (d-1)q$. If it so happens that $|\X(\Fq) \cap (\Pi \setminus \ell)| = (d-1)q$ for all $\Pi \in \B(\ell)$, then we see from \eqref{one} that $|\X(\Fq)|=(d-1)q^2 + dq + 1$, contradicting our hypothesis in view of  Theorem \ref{TT}. Thus there exists  $\Pi_0 \in \B(\ell)$ such that $|\X(\Fq) \cap (\Pi_0 \setminus \ell)| < (d-1)q$. From Theorem \ref{G}, we have $|\X(\Fq) \cap (\Pi_0 \setminus \ell)| \le (d-1)q  - (d-2)$. Consequently, from \eqref{one}, we have
    $$|\X(\Fq)| \le (q+1) + (d-1)q - (d-2) + q(d-1)q = (d-1)q^2 + dq + 1 - (d-2).$$
    Now, suppose that $\X$ does not contain any line. So for each plane $\Pi$ in $\PP^3$, we see that $\X \cap \Pi$ is a plane curve of degree $d$ not containing any lines. Since $d \le q$, Theorem \ref{hk} shows that $|\X(\Fq) \cap \Pi| \le (d-1)q+1$ for each plane $\Pi$ in $\PP^3$ defined over $\Fq$. Using Zanella's Lemma, we see that $$|\X(\Fq)| \le ((d-1)q+1)q+1 = (d-1)q^2 + q + 1 < (d-1)q^2 + dq + 1 - (d-2),$$ as desired. This completes the proof. 
\end{proof}
We are now ready to state and prove the main result of this section. 
\begin{proposition}\label{rudim}
    Suppose $3 \le d \le q$ and $m \ge 3$. Let $\X$ be a hypersurface of degree $d$ in $\PP^m$  containing no hyperplanes. If  $d \neq \sqrt{q} + 1$, then $$|\X(\Fq)| \le (d-1)q^{m-1} + dq^{m-2} + p_{m-3} - (d-2)q^{m-3}.$$      
\end{proposition}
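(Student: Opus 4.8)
The plan is to induct on $m$, taking Lemma \ref{propfor3} as the base case $m = 3$. So assume $m \ge 4$ and that the proposition holds for hypersurfaces in $\PP^{m-1}$. I would then split the argument according to whether $\X$ contains a codimension-two linear subspace defined over $\Fq$, mirroring the line/no-line dichotomy of Lemma \ref{propfor3}.

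Suppose first that $\X$ contains some $\Lambda \cong \PP^{m-2}$ defined over $\Fq$, and let $\B(\Lambda)$ denote the pencil of the $q+1$ hyperplanes through $\Lambda$. For each $\Pi \in \B(\Lambda)$ the complement $\Pi \setminus \Lambda$ is an affine space $\A^{m-1}$; since $\X$ contains no hyperplane we have $\Pi \not\subseteq \X$, so $\X \cap \Pi$ is a degree-$d$ hypersurface of $\Pi$ that contains the hyperplane $\Lambda$ of $\Pi$. Its defining form is therefore divisible by the linear form cutting out $\Lambda$, whence $\X \cap (\Pi \setminus \Lambda)$ is an affine hypersurface of degree at most $d-1$ in $\A^{m-1}$. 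Writing $|\X(\Fq)| = p_{m-2} + \sum_{\Pi \in \B(\Lambda)} |\X(\Fq) \cap (\Pi \setminus \Lambda)|$, Theorem \ref{ore} bounds each summand by $(d-1)q^{m-2}$. If equality held for all $q+1$ hyperplanes, the total would equal $(d-1)q^{m-1} + dq^{m-2} + p_{m-3}$, the Homma-Kim elementary bound, forcing $d \in \{2, \sqrt{q}+1\}$ by Theorem \ref{TT} and contradicting $d \ge 3$, $d \neq \sqrt{q}+1$. Hence some $\Pi_0$ satisfies the strict inequality, and Theorem \ref{G}, applied in $\A^{m-1}$ with $d-1$ in place of $d$ (legitimate since $2 \le d-1 < q$), gives $|\X(\Fq) \cap (\Pi_0 \setminus \Lambda)| \le (d-1)q^{m-2} - (d-2)q^{m-3}$. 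Summing the deficient slice with the $q$ remaining ones yields exactly the claimed bound.

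Suppose instead that $\X$ contains no $\PP^{m-2}$ over $\Fq$. Then for every hyperplane $\Pi$ the section $\X \cap \Pi$ is a degree-$d$ hypersurface of $\Pi \cong \PP^{m-1}$ containing no hyperplane of $\Pi$, since such a hyperplane would be an $\Fq$-rational $\PP^{m-2}$ lying in $\X$. The induction hypothesis then bounds $|\X(\Fq) \cap \Pi|$ by $a := (d-1)q^{m-2} + dq^{m-3} + p_{m-4} - (d-2)q^{m-4}$ uniformly in $\Pi$. Zanella's Lemma gives $|\X(\Fq)| \le aq + 1$, and the identity $p_{m-4}q + 1 = p_{m-3}$ simplifies this to the desired bound.

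The step I expect to be most delicate is the first case: one must check carefully that containment of $\Lambda$ genuinely lowers the degree of each affine slice to at most $d-1$, so that Theorem \ref{ore} and Theorem \ref{G} apply with the shifted degree, and that the all-equality configuration reproduces the Homma-Kim elementary bound exactly, so that Theorem \ref{TT} can exclude it. I would flag precisely where the hypotheses are used: $d \ge 3$ secures $d-1 \ge 2$ for Theorem \ref{G}, while $d \neq \sqrt{q}+1$ rules out the extremal Hermitian cone of Theorem \ref{TT}.
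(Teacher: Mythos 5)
Your proposal is correct and follows essentially the same route as the paper's own proof: induction on $m$ with Lemma \ref{propfor3} as the base case, the same dichotomy on whether $\X$ contains a codimension-two $\Fq$-linear subspace, the same slicing argument via Theorem \ref{ore}, Theorem \ref{G} and Theorem \ref{TT} in the first case, and the same use of the induction hypothesis together with Zanella's Lemma in the second. Your added care about why each affine slice has degree at most $d-1$ (divisibility by the linear form cutting out $\Lambda$) is a point the paper leaves implicit, but it is the same argument.
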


\begin{proof}
    We prove this by induction on $m$. When $m=3$, the assertion follows from Proposition \ref{propfor3}. Thus we may assume that $m >3$ and our assertion is true for all projective spaces of dimension less than $m$. Let $\X$ be a hypersurface of degree $d$ defined over $\Fq$ in $\PP^m$ without any $\Fq$-linear component. We split the proof into two cases:

    \textbf{Case 1: $\X$ contains a linear subspace of codimension $2$.} If $\X$ contains a linear subspace $\LL$ of codimension two, then we see that 
    \begin{equation}\label{two}
        |\X(\Fq)| = |\LL(\Fq)| + \sum_{\Pi \in \B(\LL)} |\X(\Fq) \cap (\Pi \setminus \LL)|, 
    \end{equation}
    where $\B(\LL)$ denotes the set of all hyperplanes in $\PP^m$ containing $\LL$ defined over $\Fq$. Thus $|\B(\LL)| = q+1$. Suppose that $\Pi \in \B(\LL)$. Then $\Pi \setminus \LL$ is an affine space of dimension $m-1$, and since $d \le q$ by hypothesis, $\X$ does not contain $\Pi \setminus \LL$. Furthermore, since $\X$ contains $\LL$, it follows that $\X \cap (\Pi \setminus \LL)$ is an affine hypersurface of degree at most $d-1$. Now, Theorem \ref{ore} implies that $|\X(\Fq) \cap (\Pi \setminus \LL)| \le (d-1)q^{m-2}$. If $|\X(\Fq) \cap (\Pi \setminus \LL)| = (d-1)q^{m-2}$ for all $\Pi \in \B(\LL)$, then we see from \eqref{two} that $|\X(\Fq)|=(d-1)q^{m-1} + dq^{m-2} + p_{m-3}$. Now Theorem \ref{TT} implies that $d$ must equal $2$ or $\sqrt{q} + 1$, contradicting our hypothesis. Thus there exists  $\Pi_0 \in \B(\ell)$ such that $|\X(\Fq) \cap (\Pi_0 \setminus \ell)| < (d-1)q^{m-2}$. Since $2 \le d-1 \le q-1$, Theorem \ref{G} applies and shows that $|\X(\Fq) \cap (\Pi_0 \setminus \ell)| \le (d-1)q^{m-2}  - (d-2)q^{m-3}$. Consequently, from \eqref{two}, we have
    $$|\X(\Fq)| \le p_{m-2} + (d-1)q^{m-2} - (d-2)q^{m-3} + q(d-1)q^{m-2}= (d-1)q^{m-1} + dq^{m-2} + 1 - (d-2)q^{m-3}.$$

    \textbf{Case 2: $\X$ does not contain a linear subspace of codimension $2$.} In this case, for each hyperplane $\Pi$ in $\PP^m$ defined over $\Fq$, we see that $\X \cap \Pi$ is a hypersurface of degree $d$ not containing any hyperplanes in $\Pi \cong \PP^{m-1}$ defined over $\Fq$. From induction hypothesis, we have $|\X(\Fq) \cap \Pi| \le (d-1)q^{m-2} + dq^{m-3} + p_{m-4} - (d-2)q^{m-4}$ for each hyperplane $\Pi$ in $\PP^m$ defined over $\Fq$. From Zanella's lemma, we see that 
    \begin{align*}
    |\X(\Fq)| &\le ((d-1)q^{m-2} + dq^{m-3} + p_{m-4} - (d-2)q^{m-4})q+1 \\
    &= (d-1)q^{m-1} + dq^{m-2} + p_{m-3} - (d-2)q^{m-3}.
    \end{align*} 
    This completes the proof. 
    \end{proof}

\begin{remark}\label{rem:hypersurface}\normalfont
\
\begin{enumerate}
    \item[(a)] First, as observed before, $(d-1)q^{m-1} + dq^{m-2} + p_{m-3} \le dq^{m-1} + p_{m-2} - (d-2)q^{m-2}$ whenever $3 \le d \le \frac{q+3}{2}$. Furthermore, equality holds only when $d = \frac{q+3}{2}$. 
    
    \item[(b)] Note that $(d-1)q^{m-1} + dq^{m-2} + p_{m-3} - (d-2)q^{m-3} < dq^{m-1} + p_{m-2} - (d-2)q^{m-2}$, when $3 \le d \le \frac{q+3}{2}$. Indeed, for $3 \le d \le \frac{q+3}{2}$, we have
\begin{align*}
    &\left(dq^{m-1} + p_{m-2} - (d-2)q^{m-2} \right) - \left((d-1)q^{m-1} + dq^{m-2} + p_{m-3} - (d-2)q^{m-3} \right)\\
    & = q^{m-1} - (2d - 3)q^{m-2}+ (d-2) q^{m-3} > q^{m-2} (q - 2d + 3) \ge 0.
\end{align*}
The strict inequality above results from the assumption that $d > 2$.  
\end{enumerate}
\end{remark}

We conclude the section by summarizing all the results presented in this section. 
\begin{theorem}\label{thm:3main}
    Suppose $q \ge 3$, $d \ge 3$, and $m \ge 3$. Let $\X$ be a hypersurface of degree $d$ defined over $\Fq$ in $\PP^m$. Moreover, assume that $|\X(\Fq)| < dq^{m-1} + p_{m-2}$. Then we have the following: 
    \begin{enumerate}
        \item[(a)] $|\X(\Fq)| \le dq^{m-1} + p_{m-2} - (d-2)q^{m-2}$, when $3 \le d \le \frac{q+3}{2}$. Moreover, equality holds only if $\X$ is a union of hyperplanes.
        \item[(b)] $|\X(\Fq)| \le (d-1)q^{m-1} + dq^{m-2} + p_{m-3} - (d-2)q^{m-3}$, whenever $\frac{q+3}{2} < d \le q$.  
    \end{enumerate}
    \end{theorem}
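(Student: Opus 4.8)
The plan is to read the theorem as a synthesis of the results already proved in this section, and to organize the argument around the single dichotomy of whether or not $\X$ contains a hyperplane defined over $\Fq$. For bookkeeping, write $A := dq^{m-1} + p_{m-2} - (d-2)q^{m-2}$ for the bound in part (a) (the one produced by Proposition \ref{hyp}) and $B := (d-1)q^{m-1} + dq^{m-2} + p_{m-3} - (d-2)q^{m-3}$ for the bound in part (b) (the one produced by Proposition \ref{rudim}). The hypothesis $|\X(\Fq)| < dq^{m-1} + p_{m-2}$ is exactly what is needed to invoke Proposition \ref{hyp} in the hyperplane case.

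For part (a), assume $3 \le d \le \frac{q+3}{2}$. If $\X$ contains a hyperplane, Proposition \ref{hyp} applies verbatim and gives $|\X(\Fq)| \le A$, with equality forcing $\X$ to be a union of hyperplanes. If $\X$ contains no hyperplane, I would split off the exceptional degree: when $d = \sqrt{q}+1$ (possible only for $q$ a square, which then forces $q \ge 4$), Remark \ref{HKsuffice}, resting on the Homma--Kim elementary bound of Theorem \ref{hkel}, already yields $|\X(\Fq)| < A$; when $d \neq \sqrt{q}+1$, Proposition \ref{rudim} gives $|\X(\Fq)| \le B$, and Remark \ref{rem:hypersurface}(b) shows $B < A$ throughout this range. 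Thus in every no-hyperplane situation one obtains the \emph{strict} inequality $|\X(\Fq)| < A$, so equality in part (a) can occur only in the hyperplane case, where Proposition \ref{hyp} already certifies that $\X$ is a union of hyperplanes. This settles (a) including the equality clause.

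For part (b), assume $\frac{q+3}{2} < d \le q$. The key preliminary remark is that $\sqrt{q}+1 < \frac{q+3}{2}$ for every $q>1$ (equivalently $(\sqrt{q}-1)^2>0$), so in this range one automatically has $d \neq \sqrt{q}+1$; hence Proposition \ref{rudim} applies directly to any $\X$ with no hyperplane component and gives $|\X(\Fq)| \le B$ at once. It remains only to dispose of the hyperplane case, where Proposition \ref{hyp} supplies just $|\X(\Fq)| \le A$, and so I must compare $A$ with $B$. A direct computation gives $A - B = q^{m-3}\bigl(q^2 - (2d-3)q + (d-2)\bigr)$, whence $A < B$ precisely when $d > \frac{q^2+3q-2}{2q-1}$.

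The step I expect to be the main obstacle is verifying that this crossover threshold is harmless for integer degrees, i.e. that every integer $d$ with $d > \frac{q+3}{2}$ already satisfies $d > \frac{q^2+3q-2}{2q-1}$, so that $A < B$ holds across the whole range of part (b). The point is an integrality gap: writing $\frac{q^2+3q-2}{2q-1} = \frac{q}{2} + \frac{7}{4} - \frac{1}{4(2q-1)}$, one checks that this quantity lies strictly below the least integer exceeding $\frac{q+3}{2}$ (namely $\frac{q+5}{2}$ when $q$ is odd and $\frac{q+4}{2}$ when $q$ is even), each reducing to a one-line linear inequality in $q$. Since $q^2 - (2d-3)q + (d-2)$ is decreasing in $d$, the inequality $A < B$ then persists for all larger $d$. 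Granting this, the hyperplane case yields $|\X(\Fq)| \le A < B$, which combined with the no-hyperplane estimate completes the proof of (b).
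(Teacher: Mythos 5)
Your proposal is correct and takes essentially the same route as the paper's proof: the dichotomy on whether $\X$ contains a hyperplane defined over $\Fq$, with Proposition \ref{hyp} and Proposition \ref{rudim} supplying the two candidate bounds and a comparison of those bounds finishing each part. If anything, you are more careful than the paper in two spots --- the paper invokes Proposition \ref{rudim} without excluding $d = \sqrt{q}+1$ (which you correctly route through Remark \ref{HKsuffice} in part (a), and correctly note is vacuous in part (b) since $\sqrt{q}+1 < \frac{q+3}{2}$), and the paper cites Remark \ref{rem:hypersurface} for the comparison in part (b) even though that remark only treats $3 \le d \le \frac{q+3}{2}$, whereas your integrality argument around the crossover value $\frac{q^2+3q-2}{2q-1}$ is precisely the step needed to justify $A < B$ there.
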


    \begin{proof}
        Let $\X$ be a hypersurface satisfying the hypothesis. If $\X$ contains a hyperplane defined over $\Fq$, then Proposition \ref{hyp} shows that $|\X(\Fq)| \le dq^{m-1} + p_{m-2} - (d-2)q^{m-2}$. If $\X$ does not contain any hyperplane defined over $\Fq$, then Proposition \ref{rudim} implies that $|\X(\Fq)| \le (d-1)q^{m-1} + dq^{m-2} + p_{m-3} - (d-2)q^{m-3}.$ Thus 
        \begin{equation}\label{eqmax2}
            |\X(\Fq)| \le \max \{dq^{m-1} + p_{m-2} - (d-2)q^{m-2}, (d-1)q^{m-1} + dq^{m-2} + p_{m-3} - (d-2)q^{m-3} \}.
        \end{equation}
         The inequalities in (a) and (b) now follow from Remark \ref{rem:hypersurface}. Suppose $\X \subset \PP^m$ be a hypersurface of degree $d$, with $3 \le d \le \frac{q+3}{2}$, defined over $\Fq$ such that 
         $$|\X(\Fq)| = dq^{m-1} + p_{m-2} - (d-2)q^{m-2}.$$
         Remark \ref{rem:hypersurface} shows that $\X$ must contain a hyperplane. 
         The assertion now follows from the last part of Proposition \ref{hyp}.
    \end{proof}

    It is conceivable that a major improvement of Homma-Kim elementary bound in all cases when $d \neq 2$ or $\sqrt{q}+1$, combined with Theorem \ref{G} will answer the question for determining all the hypersurfaces in $\PP^m$, with $m \ge 3$, that admit the second highest number of $\Fq$-rational points. 
\section{Plane curves}
In the previous section, we have partially answered the question of determination of the second highest number of $\Fq$-rational points on hypersurfaces of a given degree. It turns out that we can answer the question completely for projective plane curves. Furthermore, we present some new results on the third highest number of $\Fq$-rational points on projective plane curves. Throughout, we assume that $3 \le d \le q$. 

\subsection{Second highest number of $\Fq$-rational points on projective plane curves}
Let us denote by $\mathscr{C}$ the set of projective plane curves  of degree $d$.
It follows from Theorem \ref{S} that a curve in $\mathscr{C}$ admits at most $dq + 1$ many $\Fq$-rational points in $\PP^2$. As mentioned before, any curve attaining this bound will be given by a union of $d$ distinct lines having a point in common. Let us denote by $\mathscr{C}_1$ the set of all such curves. 
As evident from the context, we want to determine the highest number of points a curve $\C \in \mathscr{C} \setminus \mathscr{C}_1$ may admit. Sboui proved \cite[Theorem 4.2]{Sbspec} that $|\C(\Fq)| \le dq - d + 3$, for any $\C \in \mathscr{C} \setminus \mathscr{C}_1$ with $5 \le d \le \frac{q+2}{2}$.  We show that this bound works for all curves of degree at most $q - 1$.   

\begin{theorem}\label{thm2}
    Suppose $\C$ is a plane projective curve of degree $d$, with $3 \le d \le q$. If $|\C(\Fq)| < dq + 1$, then $|\C(\Fq)| \le dq - d + 3$. 
\end{theorem}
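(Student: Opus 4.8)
The plan is to argue by a clean dichotomy according to whether $\C$ contains a line defined over $\Fq$. The first thing to record is that the target quantity is exactly the $m=2$ specialization of the bound in Proposition \ref{hyp}: since $p_0 = 1$ and $q^{m-2} = 1$ when $m = 2$, the expression $dq^{m-1} + p_{m-2} - (d-2)q^{m-2}$ collapses to $dq + 1 - (d-2) = dq - d + 3$, and likewise Serre's bound $dq^{m-1} + p_{m-2}$ becomes $dq + 1$. So the hypothesis $|\C(\Fq)| < dq+1$ is precisely "$\C$ does not attain Serre's inequality," and the two cases below are designed to each deliver the bound $dq - d + 3$.

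In the first case, where $\C$ contains a line defined over $\Fq$, there is essentially nothing to prove beyond invoking the machinery already set up: the hypotheses $3 \le d \le q$ and $|\C(\Fq)| < dq + 1 = dq^{m-1} + p_{m-2}$ (with $m=2$) are exactly those of Proposition \ref{hyp}, so applying that proposition directly gives $|\C(\Fq)| \le dq - d + 3$. I would simply note that a line in $\PP^2$ is a hyperplane, so Proposition \ref{hyp} applies verbatim.

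In the second case, where $\C$ contains no line defined over $\Fq$, the plan is to appeal to the Homma--Kim theorem (Theorem \ref{hk}). Assuming for the moment that $(d,q) \neq (4,4)$, that theorem yields $|\C(\Fq)| \le (d-1)q + 1$, and it then remains only to compare this with the target. A one-line computation, $(dq - d + 3) - ((d-1)q + 1) = q - d + 2 \ge 2 > 0$ using $d \le q$, shows $|\C(\Fq)| \le (d-1)q+1 < dq - d + 3$, so the conclusion holds (with room to spare) in this subcase.

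The one genuine obstacle is the pair $(d,q) = (4,4)$, which is allowed by the hypothesis $3 \le d \le q$ and is precisely the exception to Theorem \ref{hk}. Here the target bound is $dq - d + 3 = 15$, while the naive Homma--Kim value $(d-1)q + 1 = 13$ is known to be exceeded. My plan is to handle this by invoking the finer analysis of the exceptional configuration in \cite{HK2, HK3}: a plane quartic over $\mathbb{F}_4$ with no $\mathbb{F}_4$-line component has at most $14$ rational points, and since $14 \le 15$ the desired inequality still holds. I would close the argument by citing this value for the unique exceptional case (or, if a self-contained treatment is desired, verifying directly that the exceptional quartic over $\mathbb{F}_4$ carries at most $14$ points); this is the only spot where the uniform Homma--Kim bound is unavailable and must be replaced by the known count for the exceptional curve.
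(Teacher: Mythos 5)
Your proof is correct and follows essentially the same route as the paper: a line in $\C$ triggers Proposition \ref{hyp} with $m=2$, and the line-free case is settled by Theorem \ref{hk} together with the comparison $(dq-d+3)-((d-1)q+1)=q-d+2>0$. In fact your treatment is slightly more complete than the paper's own proof, which cites Theorem \ref{hk} without addressing the excluded pair $(d,q)=(4,4)$ even though it is permitted by the hypothesis $3 \le d \le q$; you correctly isolate this case and resolve it by the known fact from \cite{HK2, HK3} that a line-free plane quartic over $\mathbb{F}_4$ has at most $14$ points, which is still below the target value $dq-d+3=15$. This patch costs nothing and closes a genuine (if minor) gap in the published argument.
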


\begin{proof}
    If $\C$ contains a line defined over $\Fq$, then Proposition \ref{hyp} applies with $m = 2$ and proves the assertion. If $\C$ does not contain any line defined over $\Fq$, then Theorem \ref{hk} shows that $|\C(\Fq)| \le (d-1)q+ 1$. Note that
    $$(dq - d + 3) - \left((d-1)q + 1 \right) = q - d + 2 > 0.$$
    The last equality is true since $d \le q-1$ by hypothesis. 
\end{proof}

We now characterize all curves of degree $d$ with $3 \le d \le q$ that attain the bound in Theorem \ref{thm2}. Note that this classification can be obtained as a consequence of Theorem \ref{leducq}. But we present an independent and elementary proof. First, we have the following Proposition that rules out all curves that are not a union of lines. 

\begin{proposition}\label{prop1}
    If $\C$ is a projective plane curve of degree $d$, with $d \le q$, containing an irreducible component of degree at least two, then $|\C(\Fq)| \le (d-1)q + 2$. In particular, if $\C$ is not a union of lines defined over $\Fq$, then $|\C(\Fq)| < dq - d + 3$. 
\end{proposition}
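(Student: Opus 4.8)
The plan is to split $\C$ into a single $\Fq$-irreducible component $D$ of degree $e \ge 2$ (which exists by hypothesis) and the union $\C'$ of all the remaining components, and then to bound the two pieces by Homma--Kim and by Serre respectively. Concretely, I would write the defining form of $\C$ as a product of $\Fq$-irreducible factors, pick one factor of degree $e \ge 2$, call its zero set $D$, and let $\C'$ be the reduced curve cut out by the product of the remaining factors, so that $\deg \C' \le d - e$. Since every $\Fq$-point of $\C$ lies on $D$ or on $\C'$, the crude union bound gives
\[
|\C(\Fq)| \le |D(\Fq)| + |\C'(\Fq)|.
\]

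For the first summand, an $\Fq$-irreducible curve of degree $e \ge 2$ cannot contain a line defined over $\Fq$, since such a line would be a proper $\Fq$-component. Hence, provided $(e,q) \neq (4,4)$, Theorem \ref{hk} applies to $D$ and yields $|D(\Fq)| \le (e-1)q + 1$. For the second summand, Serre's inequality (Theorem \ref{S} with $m = 2$) gives $|\C'(\Fq)| \le (d-e)q + 1$, this remaining valid even when $\C'$ is empty. Adding the two estimates produces
\[
|\C(\Fq)| \le \big((e-1)q + 1\big) + \big((d-e)q + 1\big) = (d-1)q + 2,
\]
the desired bound. The monotonicity of Serre's bound in the degree absorbs any drop caused by repeated factors, so working with the full degree $d$ throughout is harmless.

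The one genuine obstacle is the excluded pair $(e,q) = (4,4)$ in Theorem \ref{hk}. I would dispose of it by observing that the constraint $d \le q$ forces this case to occur only when $d = e = 4$ and $q = 4$: then necessarily $\C = D$ is an irreducible plane quartic over $\mathbb{F}_4$, since any further factor or higher multiplicity would push the degree above $q = 4$. In exactly this case the sharp bound of Theorem \ref{hk} fails, but it is part of the Homma--Kim analysis in \cite{HK1, HK2, HK3} that such a quartic carries at most $14$ rational points; since $14 = (d-1)q + 2$ for $(d,q)=(4,4)$, the stated inequality still holds (with equality). This is the only step requiring anything beyond a direct citation.

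Finally, for the ``in particular'' clause I would argue contrapositively on the factorization: if $\C$ is not a union of $\Fq$-lines, then its defining form has an $\Fq$-irreducible factor of degree at least two, so the hypothesis of the first part is met and $|\C(\Fq)| \le (d-1)q + 2$. The strict inequality then follows from the elementary computation
\[
(dq - d + 3) - \big((d-1)q + 2\big) = q - d + 1 \ge 1 > 0,
\]
which is valid since $d \le q$.
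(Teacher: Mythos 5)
Your proof is correct, and it rests on the same two pillars as the paper's proof: Serre's inequality (Theorem \ref{S}) and the Homma--Kim/Sziklai bound (Theorem \ref{hk}), applied to a two-piece decomposition of $\C$. The bookkeeping differs slightly: the paper writes $\C = \LL \cup \N$ with $\LL$ the union of \emph{all} $\Fq$-lines in $\C$ and $\N$ the union of all line-free components (Serre applied to $\LL$, Homma--Kim to $\N$, whose degree $s$ is $\ge 2$ by hypothesis), whereas you split off a \emph{single} $\Fq$-irreducible component $D$ of degree $e \ge 2$ and apply Serre to everything else; either splitting yields $\left((e-1)q+1\right) + \left((d-e)q+1\right) = (d-1)q+2$, so the two arguments are equivalent in substance. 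Where your write-up is genuinely more careful than the paper's is the exceptional pair $(d,q)=(4,4)$: the paper's first case (``if $\C$ contains no lines defined over $\Fq$, then Theorem \ref{hk} applies'') is not literally valid there, since that pair is excluded from Theorem \ref{hk}, and the paper never addresses it. Your reduction of the exception to $\C = D$ an irreducible quartic over $\mathbb{F}_4$, together with the fact from the Homma--Kim papers that such a line-free quartic has at most $14 = (d-1)q+2$ rational points, closes that gap and shows the stated bound still holds (with possible equality) in the exceptional case; the strict inequality $dq-d+3 > (d-1)q+2$ needed for the ``in particular'' clause is unaffected, exactly as you compute.
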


\begin{proof}
   If $\C$ does not contain any lines defined over $\Fq$, then Theorem \ref{hk} applies and proves the assertion. Thus, we may assume that $\C$ contains a line defined over $\Fq$. We may write $\C = \mathcal{L} \cup \mathcal{N}$, where $\mathcal{L}$ is the union of all lines defined over $\Fq$, contained in $\C$, whereas $\mathcal{N}$ is the union of all components of $\C$ containing no lines defined over $\Fq$. Let $\deg \mathcal{N} = s$ and $\deg \mathcal{L} = d-s$. By our hypothesis, we have $s \ge 2$. Using Theorem \ref{S} and Theorem \ref{hk}, we obtain,
   $$|\mathcal{L} (\Fq)| \le (d-s)q + 1 \ \ \ \text{and} \ \ \ |\mathcal{N} (\Fq)| \le (s-1)q + 1.$$
   Consequently, $|\C(\Fq)| \le |\mathcal{L} (\Fq)| + |\mathcal{N} (\Fq)| \le (d-1)q + 2$. 

\noindent Since $d \le q$, we have $\left(dq - d + 3\right) - \left((d-1)q + 2\right) = (q - d + 1) > 0$, completing the proof.
\end{proof}

\begin{theorem}\label{thm3}
    Let $3 \le d \le q$. 
    If $\C$ is a curve defined over $\Fq$ admitting $dq - d + 3$ points in $\PP^2 (\Fq)$, then $\C$ is a union of $d$ lines $\ell_1, \dots, \ell_d$ such that $\ell_1, \dots, \ell_{d-1}$ intersect at a common point $P$ and $\ell_d$ intersect each of $\ell_1, \dots, \ell_d$ at a point different from $P$.
    In particular, there are $(q^2 + q + 1)q^2{q+1 \choose d-1}$ many distinct curves attaining the bound in Theorem \ref{thm2}. 
\end{theorem}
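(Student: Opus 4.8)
The plan is to recast the statement as a problem about arrangements of lines and then settle it with one counting identity. First I would use Proposition~\ref{prop1}: since $|\C(\Fq)| = dq - d + 3$, the curve has no component of degree $\ge 2$, so $\C$ is a union of lines defined over $\Fq$. Moreover these $d$ lines must be pairwise distinct, for otherwise the support would be contained in a union of at most $d-1$ lines and hence, by Theorem~\ref{S} applied to the reduced curve, would carry at most $(d-1)q + 1 < dq - d + 3$ points. Thus $\C = \ell_1 \cup \cdots \cup \ell_d$ with the $\ell_i$ distinct. For a point $Q$ let $r_Q$ be the number of the $\ell_i$ through $Q$, and call $Q$ a \emph{vertex} when $r_Q \ge 2$. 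Counting the $d(q+1)$ incidences point by point gives $|\C(\Fq)| = dq + d - \sum_Q (r_Q - 1)$, so the hypothesis is exactly $\sum_Q (r_Q - 1) = 2d - 3$. Since $d \ge 3$, this rules out the pencil (all $\ell_i$ concurrent), whose sum is $d-1$.

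The heart of the argument, and the step I expect to be the main obstacle, is the inequality $\sum_Q (r_Q - 1) \ge (m-1) + m(d-m)$, where $m := \max_Q r_Q$ and so $2 \le m \le d-1$. To prove it I would pick a vertex $P$ with $r_P = m$ and let $\ell_1, \dots, \ell_m$ be the lines through $P$. For each $\ell_i$ every other line crosses $\ell_i$ at a unique vertex, whence $\sum_{Q \in \ell_i}(r_Q - 1) = d - 1$; subtracting the term $m-1$ coming from $P$ and summing over $i$ yields $\sum'(r_Q - 1) = m(d-m)$, the sum $\sum'$ running over vertices $Q \neq P$ that lie on some $\ell_i$ (each lies on exactly one, since the $\ell_i$ meet only at $P$). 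Adding back $m-1$ and the nonnegative contribution of vertices lying on none of the $\ell_i$ gives the bound. Setting $h(m) = (m-1) + m(d-m) = -m^2 + (d+1)m - 1$, I would then observe that $h$ is strictly concave and symmetric about $m = (d+1)/2$, with $h(2) = h(d-1) = 2d-3$; hence $h(m) > 2d-3$ for $2 < m < d-1$.

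Consequently the equality $\sum_Q(r_Q - 1) = 2d-3$ forces either $m = d-1$, or $m = 2$ with no vertex off the $\ell_i$. If $m = d-1$ we recover precisely the asserted configuration: $d-1$ lines pass through $P$, the remaining line $\ell_d$ avoids $P$, and $\ell_d$ meets the other $d-1$ lines in $d-1$ distinct points (distinct because those lines meet only at $P$). If $m = 2$, all vertices are ordinary double points, so $\sum_Q(r_Q-1) = \binom{d}{2}$, and $\binom{d}{2} = 2d-3$ forces $d = 3$, in which case $\C$ is a triangle, again of the stated shape. This completes the structural part.

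For the enumeration I would set up a bijection (valid for $d \ge 4$) between the extremal curves and triples $(P, \ell_d, \mathcal{M})$, where $P \in \PP^2(\Fq)$, $\ell_d$ is a line not through $P$, and $\mathcal{M}$ is a choice of $d-1$ of the $q+1$ lines through $P$. The point is that when $d \ge 4$ the curve determines this data unambiguously, since $P$ is then the \emph{unique} vertex with $r_P = d-1 \ge 3$ while every other vertex is a double point. As there are $p_2 = q^2 + q + 1$ choices for $P$, exactly $q^2$ lines avoiding $P$, and $\binom{q+1}{d-1}$ choices for $\mathcal{M}$, this gives $(q^2+q+1)\,q^2\,\binom{q+1}{d-1}$ curves. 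The one delicate point in the count is the case $d = 3$, where $P$ is not distinguished among the three vertices of the triangle: the same triangle then arises from three different triples, so the displayed product over-counts each curve threefold, and this degenerate case must be handled (or excluded) separately.
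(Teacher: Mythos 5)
Your proof is correct, but it takes a genuinely different route from the paper's. After the common first step (Proposition \ref{prop1} plus Serre's inequality, reducing to a union of $d$ distinct $\Fq$-lines), the paper deletes one line $\ell_1$, views $\C \setminus \ell_1$ as an affine curve of degree $d-1$ with exactly $(d-1)q-(d-2)$ points, invokes the Cherdieu--Rolland classification of extremal affine line arrangements (Theorem \ref{CR}), and then checks that both affine types (Type I: concurrent lines; Type II: parallel lines plus a transversal) glue back to the single stated projective configuration. Your argument instead stays entirely projective and combinatorial: the incidence identity $|\C(\Fq)| = dq + d - \sum_Q (r_Q - 1)$ converts the hypothesis into $\sum_Q (r_Q-1) = 2d-3$, and your concavity bound $\sum_Q (r_Q-1) \ge (m-1)+m(d-m) = h(m)$, with $h(2)=h(d-1)=2d-3$ and $h(m)>2d-3$ in between, forces $m \in \{2, d-1\}$, hence the configuration (the case $m=2$ collapsing to the triangle $d=3$). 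What each buys: the paper's proof is shorter because the hard combinatorics is outsourced to Theorem \ref{CR}, and it makes visible the link to the affine second-weight classification (Theorem \ref{leducq}); yours is self-contained, needs no affine detour or external classification, and the equality analysis shows exactly which arrangements are extremal and why.

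One further point in your favour: you correctly flag that the enumeration in the statement fails at $d=3$. A triangle has three vertices, any of which can serve as $P$, so the correspondence between curves and triples $(P,\ell_d,\mathcal{M})$ is $3$-to-$1$ there, and the true count is $\tfrac{1}{3}(q^2+q+1)q^2\binom{q+1}{2}$ rather than the displayed product; the formula is valid only for $d \ge 4$, where $P$ is the unique point lying on $d-1 \ge 3$ of the lines. The paper's proof dismisses the count as ``trivial'' and misses this; it is a genuine, if minor, erratum.
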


\begin{proof}
First, if $\C$ is not a union of lines, then Proposition \ref{prop1} shows that $|\C (\Fq)| < dq - d+ 3$. Thus, $\C$ is a union of lines. Moreover, if $\C$ is a union of at most $d-1$ lines, then Theorem \ref{S} shows that $|\C(\Fq)| \le (d-1)q+ 1 < dq - d + 3$. This shows that $\C$ is a union of $d$ distinct lines, say $\ell_1, \dots, \ell_d$. It is evident that $\C' = \C \setminus \ell_1$ is an affine curve of degree $d-1$, with $|\C'(\Fq)| = (d-1)q - (d-2)$. From Theorem \ref{CR}, we see that there are two possibilities:
\begin{enumerate}
    \item[(a)] $\ell_2, \dots, \ell_d$ meet at a common point $P$. By construction $P \notin \ell_1$. Thus, $\C$ satisfies the assertion. 
    \item[(b)] $\ell_3, \dots, \ell_d$ are parallel lines in $\AA^2$ and $\ell_2$ intersects each of $\ell_3, \dots, \ell_d$ at a point\footnote{If $d =3$, then this configuration is same as that in (a).}.  It is now evident that $\ell_1, \ell_3, \dots, \ell_d$ intersect at a common point in $\PP^2$, and $\ell_2$ intersects each of them, proving the assertion. 
\end{enumerate}
It is now trivial to count the number of curves attaining the bound. 
\end{proof}

\subsection{Towards the third highest number of points on projective plane curves} 
In this subsection, we investigate the question of determining the third highest number of $\Fq$-rational points that a projective plane curve of degree $d$ may have in $\PP^2$. In particular, if $\mathscr{C}' = \{\C \in \mathscr{C} : |\C(\Fq)| < dq - d + 3\}$, then we look for the value of  $$ \max_{\C \in \mathscr{C}'} |\C(\Fq)|.$$
A lot of work to this end has been done by A. Sboui and his collaborators (cf. \cite{RS, Sbspec, Sb2nd}). In particular, they have obtained the first three highest numbers of points on hypersurfaces of degree $d$ in $\PP^m$, which are the union of $d$ \textit{distinct} hyperplanes. Following the notations in \cite{Sbspec}, let us denote by $N_i^{\ell}$ the $i$-th highest number of $\Fq$-rational points that a curve of degree $d$ given by the union of $d$ distinct lines may have in $\PP^2$, for $i = 1, 2, 3$. In particular, for $5 \le d \le q$, we have, thanks to \cite[Theorem 3.10]{Sbspec}, 
$$N_1^{\ell} = dq + 1, \ \ N_2^{\ell} = dq - d + 3 \ \ \text{and} \ \ \ N_3^{\ell} = dq + 1 - 2 (d-3).$$
To put the above information in perspective, for $i = 1, 2, 3$, let us denote by $N_i$ the $i$-th highest number of $\Fq$-rational points on a projective plane curve of degree $d$. It is evident from Theorem \ref{S} and Theorem \ref{thm2} that for $5 \le d \le q$, we have,
$$N_1 = N_1^{\ell}  \ \ \ \text{and} \ \ \ N_2 = N_2^{\ell}.$$
Clearly, $N_3 \ge N_3^{\ell}$. Moreover, it was proved in \cite[Theorem 4.2]{Sbspec} that $N_3 = N_3^{\ell}$ for $5 \le d \le \frac{q}{3} + 2$. Thereafter, Rodier and Sboui claimed in \cite[Corollary 11]{RS} that 
when $q$ is a prime, then $N_3 = N_3^{\ell}$ for $4 < d < q-2$. We remark that this observation is incorrect. Indeed, the plane curve $\C$ of degree $d$ which is a union of $d-1$ lines $\ell_1, \dots, \ell_{d-1}$ passing through a common point $P$, with $\ell_1$ having multiplicity $2$ satisfies,
$$|\C(\Fq)| = (d-1)q + 1.$$
Note that $(d-1)q + 1 < N_2$, as observed in the proof of Theorem \ref{thm2}. Also, when $d > \frac{q+6}{2}$, we see that $N_3^{\ell} < (d-1)q+1$. Thus, the above-mentioned result fails when $q$ is a prime and $q \ge 11$. Even though the determination of $N_3$ for $3 \le d \le q$, for an arbitrary prime power $q$, is beyond reach, we present some upper bounds for $N_3$.
We are now ready to present an upper bound for $N_3$. 
\begin{theorem}\label{third}
    For $5 \le d \le q$, we have
    \begin{equation}
        N_3 = \begin{cases}
            dq + 1 - 2(d-3) \ \ &\text{if} \ \  d \le \frac{q+5}{2} \\
            \le (d-1)q + 2 \ \ \ &\text{if} \ \  d \ge \frac{q+6}{2}
        \end{cases}
    \end{equation}
\end{theorem}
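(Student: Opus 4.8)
The plan is to pin down $N_3$ by combining the already-established values $N_1=dq+1$ (Serre) and $N_2=dq-d+3$ (Theorem \ref{thm2}) with a short case analysis over the possible shapes of a degree $d$ curve, reading off the numerics from Sboui's values $N_1^\ell,N_2^\ell,N_3^\ell$. The lower bound is immediate and I would record it first: the configuration realizing $N_3^\ell$ is a union of $d$ distinct lines with $dq+1-2(d-3)=dq-2d+7$ points, and since $N_2-N_3^\ell=(dq-d+3)-(dq-2d+7)=d-4>0$ for $d\ge 5$, this curve lies in $\mathscr{C}'$. Hence $N_3\ge N_3^\ell=dq+1-2(d-3)$ in both ranges, and the whole content of the theorem is the matching upper bound.

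For the upper bound I would partition every plane curve $\C$ of degree $d$ into three exhaustive and mutually exclusive families and bound $|\C(\Fq)|$ in each under the standing hypothesis $|\C(\Fq)|<N_2$: family (I), where $\C$ is a union of $d$ distinct lines; family (II), where $\C$ is a non-reduced union of lines, hence supported on at most $d-1$ distinct lines; and family (III), where $\C$ has an irreducible component of degree at least two. In family (I), the determination of the three largest values of $N_i^\ell$ in \cite[Theorem 3.10]{Sbspec} guarantees that the largest count strictly below $N_2=N_2^\ell$ is exactly $N_3^\ell$. In family (II), the point set depends only on the reduced curve, so Serre's inequality (Theorem \ref{S}) applied in degree at most $d-1$ gives $|\C(\Fq)|\le (d-1)q+1$. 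In family (III), Proposition \ref{prop1} gives $|\C(\Fq)|\le(d-1)q+2$. Therefore every $\C\in\mathscr{C}'$ satisfies $|\C(\Fq)|\le\max\{N_3^\ell,\,(d-1)q+2\}$.

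Everything then reduces to the elementary comparison $N_3^\ell-\big((d-1)q+2\big)=q-2d+5$. When $d\le\frac{q+5}{2}$ this is nonnegative, so the maximum above equals $N_3^\ell$; since $N_3^\ell$ is realized (and one also checks $(d-1)q+1\le N_3^\ell$ in this range, which disposes of family (II)), we get $N_3=N_3^\ell=dq+1-2(d-3)$. When $d\ge\frac{q+6}{2}$ the quantity $q-2d+5$ is negative, so $(d-1)q+2$ dominates and $N_3\le(d-1)q+2$, as claimed; here the two ranges cover all admissible integers $d$ because no integer lies strictly between the two thresholds. The step requiring the most care is the combination of the completeness of the three-family partition with the use of \cite[Theorem 3.10]{Sbspec} for family (I): I must be certain that no union of $d$ distinct lines yields a count strictly between $N_2^\ell$ and $N_3^\ell$, and that families (II) and (III) genuinely absorb all remaining curves. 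Once the threshold $d=\frac{q+5}{2}$ is isolated, the inequalities among $N_3^\ell$, $(d-1)q+1$, and $(d-1)q+2$ are routine.
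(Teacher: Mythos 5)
Your proposal is correct and follows essentially the same route as the paper: the same three-way split into unions of $d$ distinct lines (bounded via Sboui's $N_3^{\ell}$), unions of at most $d-1$ distinct lines (bounded via Serre's inequality), and curves with a component of degree at least two (bounded via Proposition \ref{prop1}), followed by the same comparison $N_3^{\ell} - \bigl((d-1)q+2\bigr) = q-2d+5$ at the threshold $d = \frac{q+5}{2}$. The only difference is presentational: you verify the attainability $N_3 \ge N_3^{\ell}$ explicitly inside the proof (checking $N_2 - N_3^{\ell} = d-4 > 0$), whereas the paper records this observation just before the theorem statement.
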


\begin{proof}
    Let $\C \in \mathscr{C}'$. If $\C$ is a union of $d$ distinct lines, then $|\C(\Fq)| \le N_3^{\ell} = dq + 1 - 2(d-3)$, thanks to \cite[Theorem 3.10]{Sbspec}. It follows from Serre's inequality that if $\C$ is a union of at most $d-1$ distinct lines, then $|\C(\Fq)| \le (d-1)q + 1$. Thus, we may conclude that if $\C$ is a union of lines, then 
    \begin{equation}\label{eqmax1}
    |\C (\Fq)| \le \max \{(d-1)q + 1, dq+1 - 2(d-3)\}
    \end{equation}
    If $\C$ is not a union of lines, then Proposition \ref{prop1} implies that 
    \begin{equation}\label{notu}
        |\C(\Fq)| \le (d-1)q+2.
    \end{equation}
    Since $dq+1 - 2(d-3) \ge (d-1)q + 2$ when $d \le \frac{q+5}{2}$, the assertion follows. 
\end{proof}

\begin{remark}\label{cubicquartic}\normalfont
    We note that the Theorem \ref{third} does not apply when $d = 3$ and $d = 4$. Let us independently work out these two cases. 
\begin{enumerate}
   \item[(a)] ${\bf d = 3}:$ Let $\C$ be a projective plane cubic curve defined over $\Fq$. Suppose in addition that $\C \in \mathscr{C}'$, or equivalently, $|\C| \le 3q - 1$. If $\C$ is a union of three distinct lines defined over $\Fq$, then it is evident that $|\C(\Fq)| = 3q + 1$ or $|\C(\Fq)| = 3q$. On the other hand, if $\C$ is a union of three lines with one of them occurring at least twice, then $|\C(\Fq)| \le 2q + 1$. On the other hand, if $\C$ is irreducible, then Theorem \ref{hk} applies, and we see that $|\C(\Fq)| \le 2q + 1$. The only case to be considered here is where $\C$ is given by a union of a line $\ell$ and an irreducible quadric $Q$.  But in this case, $|\C (\Fq)| \le |\ell (\Fq)| + |Q (\Fq)| \le 2q + 2$. Furthermore, this upper bound is attained by a union of a line and a nonsingular quadric that do not intersect at any $\Fq$-rational point. In particular, we have proved that $N_3 = 2q + 2$. 
   \item[(b)] ${\bf d = 4}:$ Let $\C$ be a projective plane quartic curve defined over $\Fq$. Note that $\C \in \mathscr{C}'$ if and only if $|\C(\Fq)| < 4q - 1$. We claim that $N_3 = 4q - 2$. It is clear that $N_3 \le 4q - 2$. Consider the curve $\C$ given by a union of $4$ lines $\ell_1, \ell_2, \ell_3, \ell_4$, all defined over $\Fq$ such that for each pair $(i, j)$ with $i \neq j$, the lines $\ell_i \cap \ell_j$ at a point $P_{ij}$ and whenever $(i, j) \neq (u, v)$, we have $P_{ij} \neq P_{uv}$. It is clear that $|\C(\Fq)| = 4q - 2$, establishing our claim.   
\end{enumerate}
\end{remark}

\begin{remark}\label{fail}\normalfont

As already mentioned above, for every $d$, with $2 \le d \le q+2$, there exist curves (given by a union of $d-1$ distinct lines passing through a common point, one among them having multiplicity $2$) with exactly $(d-1)q + 1$ many points. In particular, Theorem \ref{third} implies that either $N_3 = (d-1)q+1$ or $N_3 = (d-1)q+2$, whenever $d \ge \frac{q+6}{2}$. The proof of Proposition \ref{prop1} shows that a curve $\C$, that is not a union of lines, has $(d-1)q+2$ many points if and only if $\C = \LL \cup \mathcal{N}$, where $\LL$ is the union of lines contained in $\C$, and $\mathcal{N}$ is the union of all line-free components of $\C$, satisfying:
\begin{enumerate}
    \item[(C1)] $2 \le \deg \mathcal{N}  = d-s \le d-1$,
    \item[(C2)] $|\mathcal{N} (\Fq)| = (d-s- 1)q + 1$, 
    \item[(C3)] $\deg \LL = s \ge 1$, and $\LL$ is a union of $s$ lines passing through a common point and
    \item[(C4)] $\LL (\Fq) \cap \mathcal{N}(\Fq) = \emptyset$. 
\end{enumerate}    
\end{remark}
Unfortunately, a classification of plane curves of degree $d$ with $\sqrt{q}+1 < d <q-1$ is unknown. However, partial results can be obtained to determine $N_3$. We begin with the following Lemma, which should be well-known, but we did not find a proper reference in the literature. We include a proof for the sake of completeness.

\begin{lemma}\label{sconic}
    Let $\mathcal{Q}$ be a nonsingular conic, and $\LL$ be a union of $s$ distinct lines passing through a common $\Fq$-rational point $P$. If $(\LL \cap \mathcal{Q})(\Fq)$ is empty, then $s \le \frac{q+1}{2}$. 
\end{lemma}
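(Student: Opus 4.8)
The plan is to pit the $q+1$ lines of the pencil through $P$ against the $q+1$ rational points of $\mathcal{Q}$ by a double count. First I would observe that $P \notin \mathcal{Q}(\Fq)$: otherwise $P$ lies on every line of $\LL$ and on $\mathcal{Q}$, so $P \in (\LL \cap \mathcal{Q})(\Fq)$, contradicting the hypothesis. Since $\mathcal{Q}$ is a nonsingular conic it has exactly $q+1$ rational points, and since $P$ is $\Fq$-rational there are exactly $q+1$ lines defined over $\Fq$ passing through $P$.

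Next I would classify each $\Fq$-line $\ell$ through $P$ according to the zero-dimensional scheme $\ell \cap \mathcal{Q}$, which has degree $2$ and is defined over $\Fq$. There are three possibilities for $|(\ell \cap \mathcal{Q})(\Fq)|$: the value $2$ (call $\ell$ a \emph{secant}), the value $1$ (a \emph{tangent}, where the intersection is a double rational point), and the value $0$ (an \emph{external} line, where $\ell \cap \mathcal{Q}$ is a pair of points conjugate over $\Fq^2$). The point I would stress is that an $\Fq$-line meeting $\mathcal{Q}$ in a single non-rational point cannot occur: if $\ell \cap \mathcal{Q} = 2\,[R]$ as a divisor defined over $\Fq$, then $R$ is fixed by the Frobenius and hence $\Fq$-rational; likewise two distinct geometric intersection points are either both rational or both conjugate. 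Consequently every line of $\LL$, having no rational point in common with $\mathcal{Q}$, must be an external line, so $s$ is at most the number of external lines through $P$.

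Letting $a, b, c$ denote the numbers of tangent, secant, and external lines through $P$, I would then run two counts. Counting lines gives $a + b + c = q+1$. Counting the rational points of $\mathcal{Q}$ through the map sending $Q$ to the line joining $P$ and $Q$ (well defined because $P \notin \mathcal{Q}$, and whose fibre over a tangent, secant, or external line has size $1$, $2$, or $0$ respectively) gives $a + 2b = q+1$. Subtracting the two relations yields $b = c$, whence $2c = q+1 - a \le q+1$, i.e. $c \le \frac{q+1}{2}$. Therefore $s \le c \le \frac{q+1}{2}$, as claimed.

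The double count itself is routine; the part to get right is the characteristic-free classification of the pencil lines. In particular the Galois-descent observation ruling out tangency at a non-rational point is what keeps the argument uniform in all characteristics, so that I only ever use $a \ge 0$ and never need the odd-$q$ internal/external-point dichotomy.
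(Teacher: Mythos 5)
Your proof is correct and takes essentially the same route as the paper: both pit the $q+1$ lines of the $\Fq$-pencil through $P$ (after noting $P \notin \mathcal{Q}$) against the $q+1$ rational points of the nonsingular conic, using that a conic containing no line meets each $\Fq$-line in at most two rational points, so that at least $\frac{q+1}{2}$ lines of the pencil meet $\mathcal{Q}(\Fq)$ and at most $\frac{q+1}{2}$ remain available for $\LL$. Your tangent/secant/external classification and exact double count ($a+b+c = q+1$, $a+2b = q+1$) is merely a sharper bookkeeping of the pigeonhole estimate the paper states in one line.
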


\begin{proof}
   By hypothesis $P \notin \mathcal{Q}$. Note that the $q + 1$ lines through $P$ cover all the points on $\mathcal{Q}$. Since $\mathcal{Q}$ contains no lines, there must be at least $\frac{q+1}{2}$ lines through $P$ that intersect $\mathcal{Q}$ at an $\Fq$-rational point. Thus $s \le \frac{q+1}{2}$. 
\end{proof}
\begin{proposition}
    Suppose $5 \le d \le q-1$. If $q = p$ or $q = p^2$, then 
   $$N_3 = 
        \begin{cases}
            dq+1 - 2(d-3) \ \ \mathrm{if} \ \ d \le \frac{q+5}{2} \\
            (d-1)q + 1 \ \ \ \ \mathrm{if} \ \ d \ge \frac{q+6}{2}
        \end{cases}.$$
\end{proposition}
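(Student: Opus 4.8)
The plan is to derive everything from Theorem~\ref{third} together with the extremal classification recorded in Remark~\ref{attained}. For $d \le \frac{q+5}{2}$ there is nothing to add, since the value $dq+1-2(d-3)$ is already furnished by the first line of Theorem~\ref{third}. All the content therefore lies in the range $\frac{q+6}{2} \le d \le q-1$, where Theorem~\ref{third} only yields $N_3 \le (d-1)q+2$ while Remark~\ref{fail} exhibits curves in $\mathscr{C}'$ with exactly $(d-1)q+1$ points; thus $N_3 \in \{(d-1)q+1,\,(d-1)q+2\}$ and the whole problem is to rule out the upper value under the hypothesis $q=p$ or $q=p^2$. I would first record the harmless but useful observation that $d \ge 5$ forces $q \ge 6$, so that in both cases $p$ is odd and the explicit conclusions of Remark~\ref{attained} are available.

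Next I would invoke the structure theorem of Remark~\ref{fail}: a curve $\C \in \mathscr{C}'$ with $|\C(\Fq)| = (d-1)q+2$ that is not a union of lines must split as $\C = \LL \cup \N$, with $\N$ the line-free part of degree $d-s$ obeying (C1)--(C4); curves that are unions of lines are already capped at $(d-1)q+1$ for $d \ge \frac{q+6}{2}$ by the proof of Theorem~\ref{third}. The crux is that condition (C2) says $\N$ attains the Homma--Kim bound of Theorem~\ref{hk} in degree $d-s$. A one-line sub-additivity argument shows $\N$ is irreducible: splitting $\N$ into two line-free pieces and summing the Homma--Kim bounds falls short of $(d-s-1)q+1$ by $q-1$. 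Since $2 \le d-s \le q-2 < q-1$, Remark~\ref{attained} then applies to $\N$ and pins down its degree: for $q=p$ it must be a nonsingular conic, and for $q=p^2$ a nonsingular conic or a nonsingular Hermitian curve.

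In the conic case $\deg\N = 2$, so $s = d-2$, and condition (C4) asserts that the $s$ distinct concurrent lines of $\LL$ meet $\N$ in no $\Fq$-rational point. Lemma~\ref{sconic} then gives $s \le \frac{q+1}{2}$, whence $d = s+2 \le \frac{q+5}{2}$, contradicting $d \ge \frac{q+6}{2}$. This disposes of the case $q=p$ entirely and of the conic subcase of $q=p^2$.

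The remaining case, $q=p^2$ with $\N$ a nonsingular Hermitian curve $\Hy$, is the one I expect to be the main obstacle, since it cannot be handled by the elementary Theorem~\ref{ore}/Theorem~\ref{G} machinery used elsewhere. Here I would appeal to the classical fact that every line defined over $\Fq$ meets $\Hy$ either in a single tangent $\Fq$-point or in $\sqrt{q}+1$ distinct $\Fq$-points, so that no $\Fq$-line is disjoint from $\Hy(\Fq)$. Any line occurring in $\LL$ therefore already contains a point of $\Hy(\Fq)$, so $\LL(\Fq) \cap \N(\Fq) \neq \emptyset$, in direct violation of (C4). Hence the Hermitian configuration is impossible as well, and the value $(d-1)q+2$ is excluded in every case, giving $N_3 = (d-1)q+1$ for $\frac{q+6}{2} \le d \le q-1$. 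The delicate point is precisely this intersection dichotomy for Hermitian curves, which must be cited from (or reproved within) the theory of Hermitian curves rather than obtained from the counting bounds developed in the earlier sections.
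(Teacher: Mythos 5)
Your proof is correct and follows essentially the same route as the paper's: reduce to the range $d \ge \frac{q+6}{2}$, invoke the structure conditions (C1)--(C4) of Remark \ref{fail} for a hypothetical curve with $(d-1)q+2$ points, use Remark \ref{attained} to force the line-free part $\N$ to be a nonsingular conic (for $q=p$) or a conic or Hermitian curve (for $q=p^2$), and eliminate both cases via Lemma \ref{sconic} and the fact that every $\Fq$-line meets a Hermitian curve in an $\Fq$-rational point. Your sub-additivity argument for the irreducibility of $\N$, the explicit cap on unions of lines via the proof of Theorem \ref{third}, and the parity check on $p$ are small refinements of details the paper leaves implicit, not a different method.
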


\begin{proof}
The value of $N_3$ when $5 \le d \le \frac{q+5}{2}$ is already determined in Theorem \ref{third}. Thus we investigate the case when $d \ge \frac{q+6}{2}$. Let $\C$ be a projective plane curve of degree $d$ and $\C = \LL \cup \N$, where $\LL$ denotes the union of lines contained in $\C$, whereas $\N$ is the line free component of $\C$. Suppose $\deg \LL = s$. It is evident from Theorem \ref{hk} that if $s = 0$, then $|\C (\Fq)| \le (d-1)q + 1$, and we are done. So we may assume that $s \ge 1$. In view of Remark \ref{fail}, we suppose that $\LL$ and $\N$ satisfy (C1) - (C3). We will show that in both cases, when $q = p$ or $q = p^2$, the condition (C4) can not be satisfied. 
First, assume that $d - s = 2$. Since $d \ge \frac{q+6}{2}$, it follows that $s \ge \frac{q+2}{2}$. Lemma \ref{sconic} shows that $\LL(\Fq) \cap \N(\Fq) \neq \emptyset$.
   \begin{enumerate}
       \item[(a)] If $q = p$ and $d \le q-1$, from Remark \ref{attained}, we see that $\N$ must be nonsingular plane conic. As observed above, in this case (C1)-(C4) can not be satisfied simultaneously, and hence $|\C(\Fq)| \le (d-1)q + 1$.  
       \item[(b)] If $q = p^2$, we have $d - s = 2$ or $d - s = p+1$. When $d -s = 2$,  we are done. If $d - s = p+1$, the curve $\mathcal{N}$ is, in fact, a Hermitian curve. Since any line in $\PP^2(\Fq)$ intersects the Hermitian curve at an $\Fq$-rational point, we see that (C4) is violated. Thus $|\C(\Fq)| \le (d-1)q + 1$.  
   \end{enumerate}
   This completes the proof.
\end{proof}
We leave it to the reader to translate the results of Section 3 and Section 4 in terms of the determination of the corresponding weights of the projective Reed-Muller codes. As it is clear from the context, the complete determination of the second and third weights of the projective Reed-Muller codes is intrinsically related to the determination of good upper bounds on the number of $\Fq$-rational points on hypersurfaces that are absolutely irreducible. We remark that these problems are difficult problems and will require significant research in the future. 

\section{Acknowledgement}
The author is grateful to Professor Bhaskar Bagchi for some intriguing discussions on the topic of this article. The author thanks the anonymous referee for their careful reading of the manuscript and for providing some invaluable suggestions for improving the paper.

\end{document}